\newtheoremstyle{dotless}{}{}{\itshape}{}{\bfseries}{}{}{}
\theoremstyle{dotless}
\theoremstyle{plain}
\newtheorem{thm}{Theorem}[section]
\newtheorem{prop}[thm]{Proposition}
\newtheorem{cor}[thm]{Corollary}
\theoremstyle{definition}
\newtheorem{defn}[thm]{Definition}
\newtheorem{rem}[thm]{Remark}
\newcommand{\N} {\mathbb{N}}
\newcommand{\R} {\mathbb{R}}
\newcommand{\C} {\mathbb{C}}
\newcommand{\K} {\mathbb{K}}
\newcommand{\D} {\mathbb{D}}
\newcommand{\F} {\mathcal{F}(\Omega)}
\newcommand{\FE} {\mathcal{F}(\Omega,E)}
\newcommand{\G} {\mathcal{G}(\Omega)}
\newcommand{\GE} {\mathcal{G}(\Omega,E)}
\newcommand{\FV} {\mathcal{F}v(\Omega)}
\newcommand{\FVE} {\mathcal{F}v(\Omega,E)}
\newcommand{\acx} {\operatorname{acx}}
\newcommand{\oacx} {\overline{\operatorname{acx}}}
\DeclareMathOperator{\id}{id}
\providecommand{\differential}{\mathrm{d}}
\renewcommand{\d}{\differential}
\newcommand{\euler}{\mathrm{e}}
\newcommand{\vertiii}[1]{{\left\vert\kern-0.25ex\left\vert\kern-0.25ex\left\vert #1 
    \right\vert\kern-0.25ex\right\vert\kern-0.25ex\right\vert}}
\newcommand{\fakephantomsection}{%
  \Hy@GlobalStepCount\Hy@linkcounter%
  \Hy@MakeCurrentHref{\@currenvir.\the\Hy@linkcounter}
  \Hy@raisedlink{\hyper@anchorstart{\@currentHref}\hyper@anchorend}%
}
\begin{document}

\title[Mixed topologies on Saks spaces of vector-valued functions]{Mixed topologies on Saks spaces of vector-valued functions}
\author[K.~Kruse]{Karsten Kruse\,\orcidlink{0000-0003-1864-4915}}
\address[Karsten Kruse]{University of Twente, Department of Applied Mathematics, P.O. Box 217, 7500 AE Enschede, The Netherlands, and Hamburg University of Technology, Institute of Mathematics, Am Schwarzenberg-Campus~3, 21073 Hamburg, Germany}

\email{k.kruse@utwente.nl}

\thanks{K.~Kruse acknowledges the support by the Deutsche Forschungsgemeinschaft (DFG) within the Research Training
 Group GRK 2583 ``Modeling, Simulation and Optimization of Fluid Dynamic Applications''.}

\subjclass[2020]{Primary 46A70, 46E40 Secondary 46E10, 46E15, 47D06, 54D55}

\keywords{Saks space, mixed topology, C-sequential, Mackey}

\date{\today}
\begin{abstract}
We study Saks spaces of functions with values in a normed space and the associated mixed topologies. 
We are interested in properties of such Saks spaces and mixed topologies which are relevant for applications in the 
theory of bi-continuous semigroups. 
In particular, we are interested if such Saks spaces are complete, semi-Montel, C-sequential or a (strong) Mackey space 
with respect to the mixed topology. Further, we consider the question whether the mixed and the submixed topology coincide on 
such Saks spaces and seek for explicit systems of seminorms that generate the mixed topology. 
\end{abstract}
\maketitle

\section{Introduction}\label{sect:intro}

This paper is dedicated to Saks spaces of vector-valued functions and their properties. A \emph{Saks space} is a triple 
$(X,\|\cdot\|,\tau)$ consisting of a normed space $(X,\|\cdot\|)$ and a coarser locally convex Hausdorff topology $\tau$ on $X$ 
such that the norm $\|\cdot\|$ is the supremum taken over some directed system of continuous seminorms that generates the 
$\tau$-topology, see \cite{cooper1978}. Associated to a Saks space is the \emph{mixed topology} 
$\gamma\coloneqq\gamma(\|\cdot\|,\tau)$, which was introduced in \cite{wiweger1961} and is the finest locally convex Hausdorff, 
even linear, topology between the $\|\cdot\|$-topology and $\tau$. Sequentially complete Saks spaces, i.e.~$(X,\gamma)$ 
is sequentially complete, are needed in the theory of bi-continuous semigroups, 
which were introduced in \cite{kuehnemund2001,kuehnemund2003}, to treat semigroups on Banach spaces $(X,\|\cdot\|)$ which are usually 
not strongly continuous w.r.t.~the norm $\|\cdot\|$ but only strongly continuous w.r.t.~the coarser topology $\tau$, 
e.g.~dual semigroups, implemented semigroups or transition semigroups like the Ornstein--Uhlenbeck semigroup on the space of bounded continuous functions on a Polish space. 

Besides sequential completeness there are several other properties of Saks spaces that are of importance in applications. 
The Lumer--Phillips generation theorems for bi-continuous semigroups from \cite{kruse_seifert2022b} need knowledge of explicit 
systems of seminorms that generate the the mixed topology $\gamma$ because the concept of dissipativity depends on the choice of the system of seminorms. There is another locally convex Hausdorff topology associated to a Saks space, namely the \emph{submixed topology} 
$\gamma_{\operatorname{s}}\coloneqq\gamma_{\operatorname{s}}(\|\cdot\|,\tau)$, which is defined by an explicit system of seminorms and is in general coarser than $\gamma$ but has the same convergent sequences as $\gamma$, see \prettyref{defn:mixed_top_Saks}. 
Therefore one is interested in the question when $\gamma$ and $\gamma_{\operatorname{s}}$ coincide. Moreover, the generation theorems 
like \cite[Theorems 3.10, 3.17, Corollary 3.15]{kruse_seifert2022b} need the completeness of the Saks space, 
i.e.~that $(X,\gamma)$ is complete. A sufficient conditions for $\gamma=\gamma_{\operatorname{s}}$ is that $(X,\gamma)$ is a semi-Montel space, which also implies that $(X,\gamma)$ is a complete semi-reflexive space and semi-reflexivity is needed for 
\cite[Theorems 3.17]{kruse_seifert2022b} as well. 
On the other hand, the Lumer--Phillips generation theorem \cite[Theorem 3.15, p.~75]{budde_wegner2022} for bi-continuous semigroups  needs that $(X,\gamma_{\operatorname{s}})$ is complete (see \cite[Theorem 3.11, Remark 3.12 (b)]{kruse_seifert2022b}).

The question whether $\gamma$ and $\gamma_{\operatorname{s}}$ coincide is also important for perturbation results of 
bi-continuous semigroups. If $\gamma=\gamma_{\operatorname{s}}$ and the Saks space is sequentially complete and \emph{C-sequential}, 
i.e.~every convex sequentially open subset of $(X,\gamma)$ is already open, then a bi-continuous semigroup 
on the corresponding Saks space is already locally, even quasi-, equitight by 
\cite[Theorem 3.17 (b), p.~13]{kruse_schwenninger2022}. Locally equitight bi-continuous semigroups are sometimes just called ``tight'' or ``local'' (see \cite{farkas2003,es_sarhir2006}) and local equitightness is needed for perturbation theorems like 
\cite[Theorem 1.2, p.~669]{es_sarhir2006}, \cite[Theorems 2.4, 3.2, p.~92, 94--95]{farkas2004}, \cite[Remark 4.1, p.~101]{farkas2004},
\cite[Theorem 5, p.~8]{budde2021} and \cite[Theorem 3.3, p.~582]{budde2021a}. Equitightness is relevant in ergodic theory for 
bi-continuous semigroups, see \cite[Remark 3.5 (ii), p.~147, Proposition 3.8, p.~150]{albanese2011}.

Apart from its relation to local equitightness its is also known that every bi-continuous semigroup on a sequentially complete 
C-sequential Saks space is locally, even quasi-, $\gamma$-equicontinuous by \cite[Theorem 7.4, p.~180]{kraaij2016} and 
\cite[Theorem 3.17 (a), p.~13]{kruse_schwenninger2022}. Equicontinuity and local equicontinuity are needed for 
perturbation results like dissipative perturbations or Desch--Schappacher perturbations \cite{albanese2016,jacob2015} 
and the infinitesimal description of Markov processes \cite{goldys2022}. 
Sequentially complete C-sequential Saks spaces also play a role in the duality between cost-uniform approximate 
null-controllability and final state observability, see \cite[Theorem 5.18, p.~441]{kruse_seifert2022a}. 
A sufficient condition for $(X,\gamma)$ being C-sequential is that $(X,\gamma)$ is a Mackey--Mazur space 
by \cite[Corollary 7.6, p.~52]{wilansky1981}. Here, $(X,\gamma)$ being a \emph{Mackey space} means that $\gamma$ 
is the Mackey topology of a dual pairing $\langle X,Y\rangle$ where $Y$ is a Banach space topologically isomorphic 
to the strong dual $(X,\gamma)_{b}'$, and being a \emph{Mazur space} means that all sequentially $\gamma$-continuous linear functionals are already $\gamma$-continuous. The question whether $(X,\gamma)$ is a Mackey space or even a \emph{strong} Mackey space, 
i.e.~a Mackey space such that $\sigma(Y,X)$-compact subsets of $Y$ are $\gamma$-equicontinuous, is interesting in itself, see 
e.g.~\cite[p.~553]{kunze2009} and \cite[Propositions 3.4, 4.9, p.~161, 166]{kraaij2016}. 
The condition that $(X,\gamma)$ is a sequentially complete Mackey--Mazur space is also sufficient for 
the existence of a dual bi-continuous semigroup of a bi-continuous semigroup in the sun dual theory for bi-continuous semigroups, 
see \cite[3.8 Theorem (b), p.~9--10]{kruse_schwenninger2023}.

We are interested in all of the properties listed above in the case of Saks spaces of vector-valued functions. 
Let us give an outline of our paper. In \prettyref{sect:notions} we briefly recall some notions and results from 
the theory of Saks spaces and give a characterisation of the approximation property of $(X,\gamma)$ in the case that 
$(X,\gamma)$ is a semi-Montel space in \prettyref{prop:approx_prop}. 

In \prettyref{sect:linearisation_weak} we start with a Saks space $(\F,\|\cdot\|,\tau)$ of real- or complex-valued functions 
on a non-empty set $\Omega$ such that $(\F,\|\cdot\|)$ is a Banach space and $\gamma=\gamma_{\operatorname{s}}$. 
We construct a weak $E$-valued version $(\FE_{\sigma},\|\cdot\|_{\sigma}^{E},\tau_{\sigma}^{E})$ of this space in a canonical way, 
where $E$ is a normed space (or more general a locally convex Hausdorff space), and show in 
\prettyref{thm:linearisation_full_mixed} that this triple is a complete Saks space and even complete when equipped with the 
submixed topology if $(\F,\|\cdot\|,\tau)$ is semi-Montel w.r.t.~$\gamma$, $\tau$ finer than the topology of pointwise convergence and $E$ a 
Banach space. The proof of this result is based on linearisation via Schwartz' $\varepsilon$-product and as a byproduct 
we also get a characterisation of $(\F,\gamma)$ having the approximation property in \prettyref{cor:approx_prop_vector_valued}.
We apply this result to weak $E$-valued versions of the Hardy space, weighted Bergman space and the Dirichlet space, 
whose properties we collect in \prettyref{cor:hardy_bergman_dirichlet}

In \prettyref{sect:strong_Saks_function} we consider a different way of defining an $E$-valued version of 
$(\F,\|\cdot\|,\tau)$ in \prettyref{defn:standard_space} which is available for some spaces and often stronger in the sense that 
is a subspace of $\FE_{\sigma}$ and sometimes even a strict subspace, see \prettyref{prop:relation_weak_strong}. 
In \prettyref{thm:standard_saks_space} we collect some of properties we are interested in of such strong $E$-valued Saks 
function spaces. Then we turn to specific examples. Among them are weighted spaces of continuous functions in \prettyref{cor:cv_mixed}, 
weighted space of holomorphic functions in \prettyref{cor:subspace_cont_holo_mixed}, 
weighted kernels of hypoelliptic linear partial differential operators in spaces of smooth functions in \prettyref{cor:subspace_cont_hypo_mixed}, 
in particular weighted spaces of harmonic functions, weighted Bloch spaces in \prettyref{cor:bloch_mixed}, 
spaces of Lipschitz continuous functions in \prettyref{cor:lipschitz_vanish_mixed} and spaces of $k$-times 
continuously partially differentiable functions on some open bounded set $\Omega\subset\R^{d}$ whose partial derivatives 
extend continuously to the boundary of $\Omega$ and 
whose partial derivatives of order $k$ are $\alpha$-H\"older continuous for some $0<\alpha\leq 1$ 
in \prettyref{cor:hoelder_diff__mixed}.

We close our paper with \prettyref{sect:dual_submixed} where we characterise the dual of the spaces from the preceding section 
with respect to a certain submixed topology which sometimes coincides with the mixed topology, 
see \prettyref{thm:dual_submixed_finite}. The interest in such a characterisation may also be motivated by the 
Lumer--Phillips generation theorem \cite[Corollary 3.15]{kruse_seifert2022b} which involves the dual w.r.t.~mixed topology.

\section{Notions and preliminaries}
\label{sect:notions}

In this short section we recall some basic notions from the theory of locally convex spaces, Saks spaces and 
mixed topologies. For a locally convex Hausdorff space $X$ over the field $\K\coloneqq\R$ or $\C$ we denote by $X'$ the topological linear dual space of $X$. If we want to emphasize the dependency on the locally convex Hausdorff topology $\tau$ of $X$, 
we write $(X,\tau)$ and $(X,\tau)'$ instead of just $X$ and $X'$, respectively. 
We write $(X,\tau)_{t}'$ for the space $(X,\tau)'$ equipped 
with the locally convex topology of uniform convergence on the bounded subsets of $(X,\tau)$ if $t=b$, 
and on the absolutely convex compact subsets of $(X,\tau)$ if $t=\kappa$. 
For two locally convex Hausdorff spaces $(X,\tau)$ and $(E,\tau_{E})$ we denote by $L((X,\tau),(E,\tau_{E}))$ the space 
of continuous linear maps from $(X,\tau)$ to $(E,\tau_{E})$. 
By \cite[Chap.~I, \S1, D\'{e}finition, p.~18]{schwartz1957} the \emph{$\varepsilon$-product} of Schwartz of 
$(X,\tau)$ and $(E,\tau_{E})$ is defined by 
$
X\varepsilon E\coloneqq L_{e}((X,\tau)_{\kappa}',(E,\tau_{E}))
$
where $L_{e}((X,\tau)_{\kappa}',(E,\tau_{E}))$ is the space $L((X,\tau)_{\kappa}',(E,\tau_{E}))$ 
equipped with the topology of uniform convergence on the equicontinuous subsets of $(X,\tau)'$. 
We identify the tensor product $X\otimes E$ with the linear finite rank operators in 
$X\varepsilon E$ and recap that $X$ has the \emph{approximation property} of Schwartz if and only if $X\otimes E$ is dense 
in $X\varepsilon E$ for all Banach spaces $E$ (see e.g.~\cite[Satz 10.17, p.~250]{kaballo2014}). 
Moreover, for two locally convex Hausdorff topologies $\tau_{0}$ and $\tau_{1}$ 
on $X$ we write $\tau_{0}\leq\tau_{1}$ if $\tau_{0}$ is coarser than $\tau_{1}$. 
We write $\tau_{\operatorname{co}}^{E}$ for the \emph{compact-open topology}, i.e.~the topology of uniform convergence on compact 
subsets of $\Omega$, on the space $\mathcal{C}(\Omega,E)$ of continuous functions on a topological Hausdorff space $\Omega$ with values  
in a locally convex Hausdorff space $E$. If $E=\K$, we just write $\tau_{\operatorname{co}}\coloneqq\tau_{\operatorname{co}}^{\K}$. 
In addition, we write $\tau_{\operatorname{p}}$ for the \emph{topology of pointwise convergence} on the space $\K^{\Omega}$ of $\K$-valued functions on a set $\Omega$. By a slight abuse of notation we also use the symbols $\tau_{\operatorname{co}}^{E}$ and 
$\tau_{\operatorname{p}}$ for the relative compact-open topology and the relative topology of pointwise convergence on topological subspaces of $\mathcal{C}(\Omega,E)$ and $\K^{\Omega}$, respectively (cf.~\cite[Section 2]{kruse2023a,kruse2023b}).

Let us recall the definition of the mixed topology, \cite[Section 2.1]{wiweger1961}, and the notion of a Saks space, 
\cite[I.3.2 Definition, p.~27--28]{cooper1978}, which will be important for the rest of the paper.

\begin{defn}[{\cite[Definition 2.2, p.~3]{kruse_schwenninger2022}}]\label{defn:mixed_top_Saks}
Let $(X,\|\cdot\|)$ be a normed space and $\tau$ a locally convex Hausdorff topology on $X$ such that $\tau\leq\tau_{\|\cdot\|}$ 
where $\tau_{\|\cdot\|}$ denotes the topology induced by $\|\cdot\|$.
Then
\begin{enumerate}
\item the \emph{mixed topology} $\gamma \coloneqq \gamma(\|\cdot\|,\tau)$ is
the finest linear topology on $X$ that coincides with $\tau$ on $\|\cdot\|$-bounded sets and such that 
$\tau\leq\gamma \leq \tau_{\|\cdot\|}$,
\item the triple $(X,\|\cdot\|,\tau)$ is called a \emph{Saks space} if there exists a directed system of continuous seminorms 
$\Gamma_{\tau}$ that generates the topology $\tau$ such that   
\begin{equation}\label{eq:saks}
\|x\|=\sup_{q\in\Gamma_{\tau}} q(x), \quad x\in X.
\end{equation}
\end{enumerate}
\end{defn}

In comparison to \cite[Definition 2.2, p.~3]{kruse_schwenninger2022} we dropped the assumption that the 
space $(X,\|\cdot\|)$ should be complete. 
The mixed topology $\gamma$ is actually Hausdorff locally convex and the definition given above is equivalent to the one 
introduced by Wiweger \cite[Section 2.1]{wiweger1961} due to \cite[Lemmas 2.2.1, 2.2.2, p.~51]{wiweger1961}. 

It is often useful to have a characterisation of the mixed topology by generating systems of continuous seminorms, 
e.g.~the definition of dissipativity in Lumer--Phillips generation theorems for bi-continuous semigroups 
from \cite{kruse_seifert2022b} mentioned in the introduction 
depends on the choice of the generating system of seminorms of the mixed topology. 
For that purpose we introduce the following auxiliary topology. 

\begin{defn}[{\cite[Definition 3.9, p.~9]{kruse_schwenninger2022}}]\label{defn:submixed_top}
Let $(X,\|\cdot\|,\tau)$ be a Saks space and $\Gamma_{\tau}$ a directed system of continuous seminorms 
that generates the topology $\tau$ and fulfils \eqref{eq:saks}. We set 
\[
\mathcal{N}\coloneqq \{(q_{n},a_{n})_{n\in\N}\;|\;(q_{n})_{n\in\N}\subset\Gamma_{\tau},\,(a_{n})_{n\in\N}\in c_{0}^{+}\}
\]
where $c_0^{+}$ is the family of all real non-negative null-sequences.
For $(q_{n},a_{n})_{n\in\N}\in\mathcal{N}$ we define the seminorm
\[
 \vertiii{x}_{(q_{n},a_{n})_{n\in\N}}\coloneqq\sup_{n\in\N}q_{n}(x)a_{n},\quad x\in X.
\]
We denote by $\gamma_{\operatorname{s}}\coloneqq\gamma_{\operatorname{s}}(\|\cdot\|,\tau)$ the locally convex Hausdorff topology 
that is generated by the system of seminorms $(\vertiii{\cdot}_{(q_n,a_n)_{n\in\N}})_{(q_n,a_n)_{n\in\N}\in\mathcal{N}}$ and 
call it the \emph{submixed topology}.
\end{defn}

We note that the submixed topology $\gamma_{\operatorname{s}}$ does not dpeend on the choice of $\Gamma_{\tau}$ 
that fufils \eqref{eq:saks}. By \cite[I.1.10 Proposition, p.~9]{cooper1978}, \cite[I.4.5 Proposition, p.~41--42]{cooper1978} and 
\cite[Lemma A.1.2, p.~72]{farkas2003} we have the following relation between the mixed and the submixed topology.

\begin{rem}[{\cite[Remark 3.10, p.~9]{kruse_schwenninger2022}}]\label{rem:mixed=submixed}
Let $(X,\|\cdot\|,\tau)$ be a Saks space, $\Gamma_{\tau}$ a directed system of continuous seminorms 
that generates the topology $\tau$ and fulfils \eqref{eq:saks}, $\gamma\coloneqq\gamma(\|\cdot\|,\tau)$ the mixed 
and $\gamma_{\operatorname{s}}\coloneqq\gamma_{\operatorname{s}}(\|\cdot\|,\tau)$ the submixed topology.
\begin{enumerate}
\item[(a)] We have $\tau\leq \gamma_{\operatorname{s}}\leq \gamma$ and $\gamma_{\operatorname{s}}$ has the same convergent 
sequences as $\gamma$.
\item[(b)] If 
 \begin{enumerate}
 \item[(i)] for every $x\in X$, $\varepsilon>0$ and $q\in\Gamma_{\tau}$ there are $y,z\in X$ such that $x=y+z$, 
 $q(z)=0$ and $\|y\|\leq q(x)+\varepsilon$, or 
 \item[(ii)] the $\|\cdot\|$-closed unit ball $B_{\|\cdot\|}\coloneqq\{x\in X\;|\; \|x\|\leq 1\}$ is $\tau$-compact,
 \end{enumerate}
then it holds $\gamma=\gamma_{\operatorname{s}}$. 
\end{enumerate}
\end{rem}

The submixed topology $\gamma_{\operatorname{s}}$ was originally introduced in \cite[Theorem 3.1.1, p.~62]{wiweger1961} where a 
proof of \prettyref{rem:mixed=submixed} (b) can be found, too. The following notions will also be needed, which were introduced 
in \cite[I.3.2 Definition, p.~27--28]{cooper1978}, \cite[3.3 Definition, p.~7]{kruse2023b} and 
\cite[Definition 2.2]{kruse_seifert2022b}.

\begin{defn}
Let $(X,\|\cdot\|,\tau)$ be a Saks space. 
\begin{enumerate}
\item We call $(X,\|\cdot\|,\tau)$ \emph{complete} if $(X,\gamma)$ is complete.
\item We call $(X,\|\cdot\|,\tau)$ \emph{semi-Montel} if $(X,\gamma)$ is a semi-Montel space.
\item We call $(X,\|\cdot\|,\tau)$ \emph{C-sequential} if $(X,\gamma)$ is a C-sequential space, i.e.~every convex 
sequentially open subset of $(X,\gamma)$ is already open (see \cite[p.~273]{snipes1973}).
\end{enumerate}
\end{defn}

\begin{rem}\label{rem:pre_Saks_unit_ball_char}
Let $(X,\|\cdot\|)$ be a normed space and $\tau$ a locally convex Hausdorff topology on $X$. 
Set $X^{\ast}\coloneqq (X,\|\cdot\|)'$ and denote by $\|\cdot\|_{X^{\ast}}$ the dual norm on $X^{\ast}$.
\begin{enumerate}
\item $(X,\|\cdot\|,\tau)$ is a semi-Montel Saks space if and only if $B_{\|\cdot\|}$ 
is $\tau$-compact by \cite[I.1.13 Proposition, p.~11]{cooper1978} and \cite[3.6 Remark (c), p.~8]{kruse2023b}.
\item If $(X,\|\cdot\|,\tau)$ is a semi-Montel Saks space, then $(X,\|\cdot\|,\tau)$ and $(X,\|\cdot\|)$ are complete by 
\cite[3.6 Remark (a), (b), p.~8]{kruse2023b}.
\item If $(X,\|\cdot\|,\tau)$ is a semi-Montel Saks space and $\tau_{0}$ a locally convex Hausdorff topology on $X$ such that 
$\tau_{0}\leq\tau$, then $(X,\|\cdot\|,\tau_{0})$ is a semi-Montel Saks space and 
$\gamma_{\operatorname{s}}(\|\cdot\|,\tau)=\gamma(\|\cdot\|,\tau)=\gamma(\|\cdot\|,\tau_{0})
=\gamma_{\operatorname{s}}(\|\cdot\|,\tau_{0})$ by part (a), condition (ii) of \prettyref{rem:mixed=submixed} (b) and 
\cite[3.6 Remark (c), p.~8]{kruse2023b}.
\item Let $(X,\|\cdot\|,\tau)$ be a Saks space, set $X_{\gamma}'\coloneqq (X,\gamma)'$ and denote by $\|\cdot\|_{X_{\gamma}'}$ 
the restriction of $\|\cdot\|_{X^{\ast}}$ to $X_{\gamma}'$. 
Then $(X,\gamma)_{b}'=(X_{\gamma}',\|\cdot\|_{X_{\gamma}'})$ and $(X_{\gamma}',\|\cdot\|_{X_{\gamma}'})$ is a Banach space by 
\cite[I.1.18 Proposition, p.~15]{cooper1978}.
\end{enumerate}
\end{rem}

We close this section with the following observation concerning the approximation property of $(X,\gamma)$ in the 
semi-Montel case, whose proof is an adaptation of \cite[Theorem 4.6 (i)$\Leftrightarrow$(ii), p.~651--652]{vargas2018} where 
$X=\operatorname{Lip}_{0}(\Omega)$ is the space of $\K$-valued Lipschitz continuous functions on a metric space $\Omega$ that vanish 
at the origin (see \prettyref{cor:lipschitz_vanish_mixed}).

\begin{prop}\label{prop:approx_prop}
Let $(X,\|\cdot\|,\tau)$ be a semi-Montel Saks space. Then the following assertions are equivalent.
\begin{enumerate}
\item $(X,\gamma)$ has the approximation property.
\item $(X_{\gamma}',\|\cdot\|_{X_{\gamma}'})$ has the approximation property.
\end{enumerate}
\end{prop}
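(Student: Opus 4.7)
The plan is to adapt the argument of \cite[Theorem 4.6 (i)$\Leftrightarrow$(ii), p.~651--652]{vargas2018} from the concrete space $\operatorname{Lip}_{0}(\Omega)$ to the abstract semi-Montel pre-Saks setting. By \prettyref{prop:dixmier_ng_mixed}(c) the semi-Montel hypothesis forces $(X,\|\cdot\|,\tau)$ to be semi-reflexive, so \prettyref{prop:dixmier_ng_mixed}(b) yields a topological isomorphism $\mathcal{I}\colon (X,\|\cdot\|)\to (X_{\gamma}',\|\cdot\|_{X_{\gamma}'})_{b}'$, realising $X$ as the Banach dual of $X_{\gamma}'$. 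This is the hinge on which everything turns, because it allows us to dualise operators between the two spaces. In particular, any $\gamma$-continuous finite-rank $T\colon X\to X$, written as $Tx=\sum_{i=1}^{n}\phi_{i}(x)x_{i}$ with $\phi_{i}\in X_{\gamma}'$, $x_{i}\in X$, is sent by Banach transposition (through $\mathcal{I}$) to the bounded finite-rank operator $T^{\sharp}\colon X_{\gamma}'\to X_{\gamma}'$, $T^{\sharp}\phi\coloneqq\sum_{i=1}^{n}\phi(x_{i})\phi_{i}$, and the assignment $T\mapsto T^{\sharp}$ is a bijection onto the bounded finite-rank operators on $X_{\gamma}'$, with intertwining identity $(T^{\sharp}\phi-\phi)(x)=\phi(Tx-x)$.

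Next I would rephrase both AP's \`a la Grothendieck: $(X,\gamma)$ has the AP iff $\operatorname{id}_{X}$ lies in the closure of $X_{\gamma}'\otimes X$ for the topology of uniform convergence on absolutely convex $\gamma$-compact subsets of $X$, and $(X_{\gamma}',\|\cdot\|_{X_{\gamma}'})$ has the AP iff $\operatorname{id}_{X_{\gamma}'}$ lies in the closure of $X\otimes X_{\gamma}'$ for the topology of uniform convergence on absolutely convex $\|\cdot\|_{X_{\gamma}'}$-compact subsets of $X_{\gamma}'$. Combining the intertwining identity with a swap of suprema then gives, for $\gamma$-compact $K\subset X$ and norm-compact $C\subset X_{\gamma}'$,
\[
\sup_{x\in K}p_{C}(Tx-x)\;=\;\sup_{\phi\in C}p_{K}(T^{\sharp}\phi-\phi),
\]
with $p_{C}(y)\coloneqq\sup_{\phi\in C}|\phi(y)|$ and $p_{K}(\psi)\coloneqq\sup_{x\in K}|\psi(x)|\le(\sup_{x\in K}\|x\|)\|\psi\|_{X_{\gamma}'}$. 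For (a)$\Rightarrow$(b) the semi-Montel hypothesis provides $\gamma$-compactness of $B_{\|\cdot\|}$ by \prettyref{rem:pre_Saks_unit_ball_char}(d), and the polar seminorm $p_{C}$ is $\gamma$-continuous: it is $\|\cdot\|$-continuous (since $C$ is norm-bounded) and $\tau$-continuous on $B_{\|\cdot\|}$ (total norm-boundedness of $C$ reduces the estimate to a finite maximum $\max_{i}|\phi_{i}|$ of elements of $X_{\gamma}'$, each of which is $\tau$-continuous on $\|\cdot\|$-bounded sets because $\gamma=\tau$ there); the AP of $(X,\gamma)$ applied to $K=B_{\|\cdot\|}$ and seminorm $p_{C}$ then delivers the required finite-rank $T$, and the displayed identity translates it into $\sup_{\phi\in C}\|T^{\sharp}\phi-\phi\|_{X_{\gamma}'}<\varepsilon$.

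I expect the main obstacle to lie in the reverse direction (b)$\Rightarrow$(a): for a given $\gamma$-continuous seminorm $q$ on $X$ one must produce a norm-compact $C\subset X_{\gamma}'$ with $q\le Mp_{C}$ for some $M>0$. Mackey--Arens only delivers absolutely convex $\sigma(X_{\gamma}',X)$-compact $C$, which is norm-bounded but in general not norm-compact, so one really needs to upgrade this. Here the semi-Montel property must be used in its full strength: since $(X,\|\cdot\|,\tau)$ is then automatically a Saks space by \prettyref{rem:pre_Saks_unit_ball_char}(a), one has the submixed description from \prettyref{defn:submixed_top} giving every $\gamma$-continuous seminorm a concrete bound by $\sup_{n}a_{n}q_{n}(x)$ with $q_{n}\in\Gamma_{\tau}$ and $a_{n}\in c_{0}^{+}$, and the $\tau$-equicontinuous sets representing the $q_{n}$, weighted by the null sequence $a_{n}$, should assemble into a norm-compact $C\subset X_{\gamma}'$ with $q\le p_{C}$; the null-sequence factor is what forces relative norm-compactness out of mere $\tau$-equicontinuity, via the $\tau$-compactness of $B_{\|\cdot\|}$. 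Once this cofinality of norm-compact polar seminorms is in hand, the bound $\sup_{x\in K}q(Tx-x)\le(\sup_{x\in K}\|x\|)\sup_{\phi\in C}\|T^{\sharp}\phi-\phi\|_{X_{\gamma}'}$ combined with the AP of $X_{\gamma}'$ on $C$ closes (b)$\Rightarrow$(a).
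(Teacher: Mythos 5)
Your argument is sound and reaches the result by a genuinely different route from the paper. The paper settles both directions in a few lines: using \prettyref{rem:pre_Saks_unit_ball_char} (d), \prettyref{prop:dixmier_ng_mixed} (a), (c) and \cite[I.4.1 Proposition, p.~38]{cooper1978} it establishes the mutual duality $(X,\gamma)=(X_{\gamma}',\|\cdot\|_{X_{\gamma}'})_{\kappa}'$ and $(X_{\gamma}',\|\cdot\|_{X_{\gamma}'})=(X,\gamma)_{b}'=(X,\gamma)_{\kappa}'$, and then applies \cite[Corollary 1.3, p.~144]{dineen2004} (a space inherits the approximation property from its $\kappa$-dual) once in each direction. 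You instead unpack the mechanism behind that abstract result: the Grothendieck criterion on both sides, the transposition $T\mapsto T^{\sharp}$ through the Dixmier--Ng isomorphism $\mathcal{I}$ of \prettyref{prop:dixmier_ng_mixed} (b), and the two compatibility statements that norm-compact $C\subset X_{\gamma}'$ yield $\gamma$-continuous polar seminorms $p_{C}$ and, conversely, that every $\gamma$-continuous seminorm is dominated by some such $p_{C}$. Note that these two statements taken together are exactly the identity $(X,\gamma)=(X_{\gamma}')_{\kappa}'$ that the paper cites from Cooper, so the ``main obstacle'' you isolate in (b)$\Rightarrow$(a) is a quotable fact rather than something you must prove from scratch; your sketch of it is nevertheless correct and would need to be written out if you want a self-contained proof: each $U_{q_{n}}^{\circ}$ is $\tau$-equicontinuous and $\sigma(X',X)$-compact, the weak-$*$ and operator-norm topologies coincide on $\tau$-equicontinuous sets because $B_{\|\cdot\|}$ is $\tau$-compact, hence each $U_{q_{n}}^{\circ}$ is norm-compact, and the weights $(a_{n})\in c_{0}^{+}$ make $\oacx\bigl(\bigcup_{n\in\N}a_{n}U_{q_{n}}^{\circ}\bigr)$ norm-compact in the Banach space $X_{\gamma}'$ while dominating $\vertiii{\cdot}_{(q_{n},a_{n})_{n\in\N}}$ by the bipolar theorem. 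What your approach buys is self-containedness and an explicit quantitative link between the two approximation problems; what the paper's buys is brevity plus the by-product, used later, that $(X,\gamma)$ is a DFC-space.
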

\begin{proof}
(a)$\Rightarrow$(b): Due to \prettyref{rem:pre_Saks_unit_ball_char} (a) and 
\cite[I.4.1 Proposition, p.~38]{cooper1978} (or \cite[I.4.2 Corollary (d), p.~38]{cooper1978} in combination 
with \cite[Theorem 4.1, p.~43]{mujica1984}) we have $(X,\gamma)=(X_{\gamma}',\|\cdot\|_{X_{\gamma}'})_{\kappa}'$. 
Hence $E\coloneqq (X_{\gamma}',\|\cdot\|_{X_{\gamma}'})$ has the approximation property 
by \cite[Corollary 1.3, p.~144]{dineen2004} because $(X,\gamma)=E_{\kappa}'$ has the approximation property.

(b)$\Rightarrow$(a): We note that $(X_{\gamma}',\|\cdot\|_{X_{\gamma}'})=(X,\gamma)_{b}'=(X,\gamma)_{\kappa}'$ 
by \prettyref{rem:pre_Saks_unit_ball_char} (d) and the semi-Montel property of $(X,\gamma)$. 
Thus $E\coloneqq (X,\gamma)$ has the approximation property by \cite[Corollary 1.3, p.~144]{dineen2004} because 
$(X_{\gamma}',\|\cdot\|_{X_{\gamma}'})=E_{\kappa}'$ has the approximation property.
\end{proof}

In particular, the proof above shows that $(X,\gamma)
=(X_{\gamma}',\|\cdot\|_{X_{\gamma}'})_{\kappa}'=((X,\gamma)_{b}')_{\kappa}'$, i.e.~$(X,\gamma)$ 
is a \emph{DFC-space} by \cite[Theorem 4.1, p.~43]{mujica1984}, if $(X,\|\cdot\|,\tau)$ is a semi-Montel Saks space.
Sufficient conditions that guarantee that $(X,\gamma)$ has the approximation property may be found 
in \cite[I.4.20 Proposition, p.~53]{cooper1978} and \cite[I.4.21, I.4.22 Corollaries, p.~54]{cooper1978}. 

\section{Saks spaces of weak vector-valued functions}
\label{sect:linearisation_weak}

In this section we introduce Saks spaces of weak vector-valued functions. We use a linearisation based on the $\varepsilon$-product 
to show that they are complete w.r.t.~the mixed and the submixed topology if their scalar-valued version is semi-Montel, 
$\tau_{\operatorname{p}}\leq\tau$ and they have values in a Banach space. 

Let $(\F,\|\cdot\|)$ be a Banach space of $\K$-valued functions on a non-empty set $\Omega$ such that 
$\tau_{\operatorname{p}}\leq\tau_{\|\cdot\|}$. 
We recall from \cite[p.~31]{kruse2023a} a canonical construction of a weak vector-valued version of such a space.
For a locally convex Hausdorff space $E$ over $\K$ with directed system of seminorms $\Gamma_{E}$ generating its topology
we define the \emph{space of weak $E$-valued $\mathcal{F}$-functions} by 
\[
\FE_{\sigma}\coloneqq\{f\colon \Omega\to E\;|\;\forall\;e'\in E':\;e'\circ f\in\F\}.
\]
For $p\in\Gamma_{E}$ we set $U_{p}\coloneqq\{x\in E\;|\; p(x)<1\}$ and denote by $U_{p}^{\circ}$ the polar of $U_{p}$. 
Since $(\F,\|\cdot\|)$ is a Banach space, thus webbed, 
the supremum $\|f\|_{\sigma,p}\coloneqq \sup_{e'\in U_{p}^{\circ}}\|e'\circ f\|$ is finite 
for every $f\in\FE_{\sigma}$ and $p\in\Gamma_{E}$ by \cite[5.1 Remark, p.~31]{kruse2023a}. 
Hence the space $\FE_{\sigma}$ equipped with the system of seminorms $(\|\cdot\|_{\sigma,p})_{p\in\Gamma_{E}}$ is 
a locally convex Hausdorff space. If $(E,\|\cdot\|_{E})$ is a normed space with 
$\Gamma_{E}\coloneqq\{\|\cdot\|_{E}\}$, then $(\FE_{\sigma},\|\cdot\|_{\sigma}^{E})$ is a normed space where 
$\|\cdot\|_{\sigma}^{E}\coloneqq\|\cdot\|_{\sigma,\|\cdot\|_{E}}$. 

Now, let $\tau$ be an additional locally convex Hausdorff topology on $\F$ such that $(\F,\|\cdot\|,\tau)$ is a Saks space and 
$\gamma\coloneqq\gamma(\|\cdot\|,\tau)=\gamma_{\operatorname{s}}(\|\cdot\|,\tau)$. 
Then, by \prettyref{defn:submixed_top}, a directed system of seminorms that generates $\gamma$ is given by 
\[
 \vertiii{f}_{(q_{n},a_{n})_{n\in\N}}\coloneqq\sup_{n\in\N}q_{n}(f)a_{n},\quad f\in \F,
\]
for $(q_{n})_{n\in\N}\subset\Gamma_{\tau}$, 
where $\Gamma_{\tau}$ is a directed system of continuous seminorms 
that generates the topology $\tau$ and fulfils \eqref{eq:saks}, 
and $(a_{n})_{n\in\N}\in c_{0}^{+}$. We set 
\begin{equation}\label{eq:weak_submixed}
 \vertiii{f}_{\sigma,(q_{n},a_{n})_{n\in\N},p}
 \coloneqq\sup_{e'\in U_{p}^{\circ}}\vertiii{e'\circ f}_{(q_{n},a_{n})_{n\in\N}}
 =\sup_{e'\in U_{p}^{\circ}}\sup_{n\in\N}q_{n}(e'\circ f)a_{n},\;\; f\in \FE_{\sigma},
\end{equation}
for $p\in\Gamma_{E}$, $(q_{n})_{n\in\N}\subset\Gamma_{\tau}$ and $(a_{n})_{n\in\N}\in c_{0}^{+}$. 
Then for $p\in\Gamma_{E}$, $(q_{n})_{n\in\N}\subset\Gamma_{\tau}$ and 
$(a_{n})_{n\in\N}\in c_{0}^{+}$ it holds
\[
\vertiii{f}_{\sigma,(q_{n},a_{n})_{n\in\N},p}\leq \sup_{n\in\N}|a_{n}|\|f\|_{\sigma,p}<\infty
\]
for all $f\in \FE_{\sigma}$. So the system of seminorms 
$(\vertiii{f}_{\sigma,(q_{n},a_{n})_{n\in\N},p})_{(q_{n},a_{n})_{n\in\N}\in\mathcal{N},p\in\Gamma_{E}}$ induces a locally convex 
Hausdorff topology on $\FE_{\sigma}$ which we denote by $\gamma_{\sigma,\operatorname{s}}^{E}$.

\begin{rem}\label{rem:weak_vector_valued_ind_of_choice_seminorms}
Let $(\F,\|\cdot\|,\tau)$ be a Saks space of $\K$-valued functions on a non-empty set $\Omega$ such that $(\F,\|\cdot\|)$ is 
a Banach space, $\tau_{\operatorname{p}}\leq\tau_{\|\cdot\|}$ and $\gamma(\|\cdot\|,\tau)=\gamma_{\operatorname{s}}(\|\cdot\|,\tau)$, and $E$ a locally convex Hausdorff space over 
$\K$ with directed system of seminorms $\Gamma_{E}$ generating its topology. 
Then the topology $\gamma_{\sigma,\operatorname{s}}^{E}$ does not depend 
on the choice of the system of seminorms that generates $\gamma\coloneqq \gamma(\|\cdot\|,\tau)$. 
Indeed, let $\Gamma_{\gamma}$ be a another system of seminorms that generates $\gamma$. 
Then for every $(q_{n})_{n\in\N}\subset\Gamma_{\tau}$, $\Gamma_{\tau}$ as above, and $(a_{n})_{n\in\N}\in c_{0}^{+}$ there are 
$C_{0}\geq 0$ and $r_{0}\in\Gamma_{\gamma}$ such that 
$
\vertiii{f}_{\sigma,(q_{n},a_{n})_{n\in\N}}\leq C_{0}r_{0}(f)
$
for all $f\in\F$. On the other hand, for every and $r_{1}\in\Gamma_{\gamma}$ there are $C_{1}\geq 0$, 
$(\widetilde{q}_{n})_{n\in\N}\subset\Gamma_{\tau}$ and $(\widetilde{a}_{n})_{n\in\N}\in c_{0}^{+}$ such that 
$
 r_{1}(f)\leq C_{1}\vertiii{f}_{\sigma,(\widetilde{q}_{n},\widetilde{a}_{n})_{n\in\N}}
$
for all $f\in\F$. This implies that 
\[
\vertiii{f}_{\sigma,(q_{n},a_{n})_{n\in\N},p}\leq C_{0}\sup_{e'\in U_{p}^{\circ}}r_{0}(e'\circ f)
\]
and 
\[
\sup_{e'\in U_{p}^{\circ}}r_{1}(e'\circ f)\leq C_{1}\vertiii{f}_{\sigma,(\widetilde{q}_{n},\widetilde{a}_{n})_{n\in\N},p}
\]
for all $f\in\FE_{\sigma}$ and $p\in\Gamma_{E}$. Thus the system of seminorms given by 
\[
|f|_{\sigma,r,p}\coloneqq \sup_{e'\in U_{p}^{\circ}}r(e'\circ f),\quad f\in \FE_{\sigma},
\]
for $r\in\Gamma_{\gamma}$ and $p\in\Gamma_{E}$ also generates $\gamma_{\sigma,\operatorname{s}}^{E}$. 
Similarly, $\gamma_{\sigma,\operatorname{s}}^{E}$ does not depend on the choice of $\Gamma_{E}$.
\end{rem}

\begin{rem}\label{rem:weak_vector_valued_Saks}
Let $(\F,\|\cdot\|,\tau)$ be a Saks space of $\K$-valued functions on a non-empty set $\Omega$ such that $(\F,\|\cdot\|)$ is 
a Banach space, $\tau_{\operatorname{p}}\leq\tau_{\|\cdot\|}$ and $\gamma(\|\cdot\|,\tau)=\gamma_{\operatorname{s}}(\|\cdot\|,\tau)$, and $(E,\|\cdot\|_{E})$ a normed space over 
$\K$. Then $(\FE_{\sigma},\|\cdot\|_{\sigma}^{E},\tau_{\sigma}^{E})$ is a Saks space where 
$\tau_{\sigma}^{E}$ is the locally convex Hausdorff topology on $\FE_{\sigma}$ generated by the system of seminorms given by
\[
q_{\sigma}^{E}(f)\coloneqq\sup_{e'\in U_{\|\cdot\|_{E}}^{\circ}}q(e'\circ f)
=\sup_{e^{\ast}\in B_{\|\cdot\|_{E^{\ast}}}}q(e^{\ast}\circ f),\quad f\in\FE_{\sigma}, 
\]
for $q\in\Gamma_{\tau}$ and $\Gamma_{\tau}$ as above. Indeed, this follows from \prettyref{defn:mixed_top_Saks} 
and the observation
\[
\sup_{q\in\Gamma_{\tau}}q_{\sigma}^{E}(f)
=\sup_{q\in\Gamma_{\tau}}\sup_{e^{\ast}\in B_{\|\cdot\|_{E^{\ast}}}}q(e^{\ast}\circ f)
=\sup_{e^{\ast}\in B_{\|\cdot\|_{E^{\ast}}}}\sup_{q\in\Gamma_{\tau}}q(e^{\ast}\circ f)
=\sup_{e^{\ast}\in B_{\|\cdot\|_{E^{\ast}}}}\|e^{\ast}\circ f\|
=\|f\|_{\sigma}^{E}.
\]
Further, $\gamma_{\sigma,\operatorname{s}}^{E}=\gamma_{\operatorname{s}}(\|\cdot\|_{\sigma}^{E},\tau_{\sigma}^{E})$ 
by \prettyref{defn:submixed_top} and the definitions of $\tau_{\sigma}^{E}$ and $\gamma_{\sigma,\operatorname{s}}^{E}$.
\end{rem}

For a linear space $\F$ of $\K$-valued functions on a non-empty set $\Omega$ and $x\in\Omega$ we define the linear functional 
$\Delta(x)\colon\F\to\K$, $\Delta(x)(f)\coloneqq f(x)$. 

\begin{thm}\label{thm:linearisation_full_mixed}
Let $(\F,\|\cdot\|,\tau)$ be a semi-Montel Saks space of $\K$-valued functions on a non-empty set $\Omega$ such that 
$\tau_{\operatorname{p}}\leq\tau$, $E$ a complete locally convex Hausdorff space over $\K$ and 
set $\F_{\gamma}\coloneqq (\F,\gamma)$. Then the map 
\[
\chi\colon \F_{\gamma}\varepsilon E \to(\FE_{\sigma},\gamma_{\operatorname{s},\sigma}^{E}),\;\chi(u)\coloneqq u\circ \Delta = [x\mapsto u(\Delta(x))],
\]
is a topological isomorphism. In particular, $(\FE_{\sigma},\gamma_{\sigma,\operatorname{s}}^{E})$ is complete. 
If $E$ is a Banach space, then $(\FE_{\sigma},\|\cdot\|_{\sigma}^{E},\tau_{\sigma}^{E})$ is a complete Saks space.
\end{thm}
\begin{proof}
First, we show that $\chi$ is well-defined and linear.  
Since $\tau_{\operatorname{p}}\leq\tau\leq\gamma$, it holds $\Delta(x)\in\F_{\gamma}'$ for all $x\in\Omega$ 
(cf.~\cite[4.2 Remark, p.~12]{kruse2023b}). 
We note that $(\F_{\gamma})_{\kappa}'=(\F_{\gamma})_{b}'$ because $\F_{\gamma}$ is a semi-Montel space. 
Hence $\F_{\gamma}\varepsilon E =L_{e}((\F_{\gamma})_{b}',(E,\tau_{E}))$ where 
$\tau_{E}$ denotes the locally convex Hausdorff topology of $E$.
Let $u\in \F_{\gamma}\varepsilon E$ and $e'\in E'$.
Using that $e'\circ u\in ((\F_{\gamma})_{b}')'$ and the semi-reflexivity of the semi-Montel space $\F_{\gamma}$, 
we note that there is 
$f_{e'\circ u}\in\F$ such that $(e'\circ u)(f')=f'(f_{e'\circ u})$ for all $f'\in\F_{\gamma}'$. 
This implies with $f'=\Delta(x)$ that $(e'\circ u)\circ\Delta=f_{e'\circ u}$. 
Thus the map $\chi$ is well-defined and it is easily seen to be linear as well. 

Second, we show that $\chi$ is injective and continuous. 
Let $\Gamma_{\tau}$ be a directed system of continuous seminorms 
that generates the topology $\tau$ and fulfils \eqref{eq:saks}. 
For $(q_{n})_{n\in\N}\subset\Gamma_{\tau}$ and $(a_{n})_{n\in\N}\in c_{0}^{+}$ we define 
$U_{q_{n}a_{n}}\coloneqq \{f\in\F\;|\;q_{n}(f)a_{n}< 1\}$ and note that the sets 
\[
V_{(q_{n},a_{n})_{n\in\N}}\coloneqq \bigcap_{n\in\N}U_{q_{n}a_{n}}
\]
form a base of $\gamma$-neighbourhoods of zero by \prettyref{defn:submixed_top} since 
$\gamma=\gamma_{\operatorname{s}}$ by condition (ii) of \prettyref{rem:mixed=submixed} (b) 
and \prettyref{rem:pre_Saks_unit_ball_char}. 
By the bipolar theorem we have 
\[
V_{(q_{n},a_{n})_{n\in\N}}^{\circ}=
\oacx \Bigl(\bigcup_{n\in\N}U_{q_{n}a_{n}}^{\circ}\Bigr)
\eqqcolon \oacx(W_{(q_{n},a_{n})_{n\in\N}})
\]
where $\oacx(W_{(q_{n},a_{n})_{n\in\N}})$ denotes the closure in $(\F,\gamma)_{\kappa}'$ 
of the absolutely convex hull $\acx(W_{(q_{n},a_{n})_{n\in\N}})$ 
of $W_{(q_{n},a_{n})_{n\in\N}}\coloneqq\bigcup_{n\in\N}U_{q_{n}a_{n}}^{\circ}$ 
(see \cite[8.2.4 Corollary, p.~149]{jarchow1981}). Due to \cite[8.4, p.~152, 8.5, p.~156--157]{jarchow1981} the topology of 
$\F_{\gamma}\varepsilon E$ is generated by the seminorms 
\[
|u|_{(q_{n},a_{n})_{n\in\N},p}\coloneqq \sup_{y\in V_{(q_{n},a_{n})_{n\in\N}}^{\circ}}p(u(y))
=\sup_{y\in \oacx(W_{(q_{n},a_{n})_{n\in\N}})}p(u(y)),
\quad u\in \F_{\gamma}\varepsilon E,
\]
for $(q_{n})_{n\in\N}\subset\Gamma_{\tau}$, $(a_{n})_{n\in\N}\in c_{0}^{+}$ and $p\in\Gamma_{E}$ where $\Gamma_{E}$ 
denotes a system of seminorms that generates $\tau_{E}$. By the continuity of $u\in \F_{\gamma}\varepsilon E$ we have 
\[
|u|_{(q_{n},a_{n})_{n\in\N},p}=\sup_{y\in \acx(W_{(q_{n},a_{n})_{n\in\N}})}p(u(y)) 
\geq \sup_{y\in W_{(q_{n},a_{n})_{n\in\N}}}p(u(y)).
\]
On the other hand, for $y\in\acx(W_{(q_{n},a_{n})_{n\in\N}})$ there are 
$m\in\N$, $\lambda_{k}\in\K$, $f_{k}'\in U_{q_{k}a_{k}}^{\circ}$, $1\leq k\leq m$ 
with $\sum_{k=1}^{m}|\lambda_{k}|=1$ such that $y=\sum_{k=1}^{m}\lambda_{k}f_{k}'$. 
It follows that for all $u\in \F_{\gamma}\varepsilon E$
\[
  p(u(y))
\leq\sum_{k=1}^{m}|\lambda_{k}|p(u(f_{k}'))
\leq \sup_{1\leq k\leq m} p(u(f_{k}'))
\leq \sup_{z\in W_{(q_{n},a_{n})_{n\in\N}}}p(u(z))
\] 
and we deduce 
\begin{equation}\label{eq:seminorms_eps}
 |u|_{(q_{n},a_{n})_{n\in\N},p}
=\sup_{y\in W_{(q_{n},a_{n})_{n\in\N}}}p(u(y))
=\sup_{n\in\N}\sup_{f'\in U_{q_{n}a_{n}}^{\circ}}p(u(f')).
\end{equation}
By the first part of the proof there is $f_{e'\circ u}\in\F$ such that $(e'\circ u)(f')=f'(f_{e'\circ u})$ 
and $(e'\circ u)\circ\Delta=f_{e'\circ u}$ for all $u\in \F_{\gamma}\varepsilon E$, $f'\in\F_{\gamma}'$ 
and $e'\in E'$. We conclude that 
\begin{align}\label{eq:chi_cont}
 \vertiii{\chi(u)}_{\sigma,(q_{n},a_{n})_{n\in\N},p}\nonumber
&=\sup_{e'\in U_{p}^{\circ}}\sup_{n\in\N}q_{n}(e'\circ (u\circ\Delta))a_{n}
 =\sup_{e'\in U_{p}^{\circ}}\sup_{n\in\N}\sup_{f'\in U_{q_{n}a_{n}}^{\circ}}|f'(f_{e'\circ u})|\nonumber\\
&=\sup_{e'\in U_{p}^{\circ}}\sup_{n\in\N}\sup_{f'\in U_{q_{n}a_{n}}^{\circ}}|(e'\circ u)(f')|
 =\sup_{n\in\N}\sup_{f'\in U_{q_{n}a_{n}}^{\circ}}p(u(f'))\nonumber\\
&\underset{\mathclap{\eqref{eq:seminorms_eps}}}{=}|u|_{(q_{n},a_{n})_{n\in\N},p}
\end{align}
for all $u\in\F_{\gamma}\varepsilon E$. Hence $\chi$ is injective and continuous.  

Third, we show that $\chi$ is surjective and note that \eqref{eq:chi_cont} implies that the inverse of $\chi$ 
is continuous. Due to \prettyref{rem:pre_Saks_unit_ball_char} (d) we have
\[
\F_{\gamma}\varepsilon E =L_{e}((\F_{\gamma})_{b}',(E,\tau_{E}))=L_{e}((\F_{\gamma}',\|\cdot\|_{\F_{\gamma}'}),(E,\tau_{E})).
\]
Hence the surjectivity of $\chi$ is a consequence of \cite[5.5 Theorem, p.~33]{kruse2023a}, 
and \cite[4.1 Corollary, p.~12]{kruse2023b}.

Fourth, since $\F_{\gamma}$ and $E$ are complete, $\F_{\gamma}\varepsilon E$ is also complete 
by \cite[Satz 10.3, p.~234]{kaballo2014}, implying the completeness of $(\FE_{\sigma},\gamma_{\sigma,\operatorname{s}}^{E})$. 
Let $E$ be a Banach space. Due to \prettyref{rem:weak_vector_valued_Saks} and \prettyref{rem:mixed=submixed} (a) 
$(\FE_{\sigma},\|\cdot\|_{\sigma}^{E},\tau_{\sigma}^{E})$ is a Saks space 
and $\gamma_{\sigma,\operatorname{s}}^{E}=\gamma_{\operatorname{s}}(\|\cdot\|_{\sigma}^{E},\tau_{\sigma}^{E})
\leq \gamma(\|\cdot\|_{\sigma}^{E},\tau_{\sigma}^{E})$, which implies the completeness of the Saks space.
\end{proof}

The proof of the continuity of $\chi$ and its inverse in \prettyref{thm:linearisation_full_mixed} 
is similar to the proof of \cite[Lemma 7, p.~1517]{kruse2017}. Moreover, 
\prettyref{thm:linearisation_full_mixed} in combination with 
\prettyref{rem:weak_vector_valued_Saks}, \prettyref{prop:relation_weak_strong} (b) 
and \prettyref{cor:lipschitz_vanish_mixed} (a) and (d)
generalises \cite[Theorem 4.4, p.~648]{vargas2018} 
where $\FE=\operatorname{Lip}_{0}(\Omega,E)$, $E$ is a Banach space and 
$\gamma_{\operatorname{s},\sigma}^{E}=\gamma_{\operatorname{s}}(\|\cdot\|_{\operatorname{Lip}_{0}(\Omega,E)},\tau_{\mathcal{N}_{\Omega_{\operatorname{wd}}}}^{E})=\gamma\tau_{\gamma}$.
Due to \prettyref{prop:relation_weak_strong} (d) in combination with \prettyref{cor:cv_mixed} (b), \prettyref{cor:subspace_cont_holo_mixed} (d) and (e), 
\prettyref{cor:subspace_cont_hypo_mixed} (b) and (c), the result of \prettyref{thm:linearisation_full_mixed} 
is already contained in \cite[3.1 Bemerkung, p.~141]{bierstedt1973b} (cf.~\cite[5.2.10 Proposition, p.~77]{kruse2023}, 
\cite[5.2.17 Corollary, p.~80]{kruse2023} and 
\cite[4.8 Theorem, p.~878]{mujica1991}) for the weighted space $\FE=\mathcal{C}v(\Omega,E)$ of continuous functions from \prettyref{cor:cv_mixed} if $\Omega$ is discrete, the weighted space $\FE=\mathcal{H}v(\Omega,E)$ of holomorphic functions 
from \prettyref{cor:subspace_cont_holo_mixed} and the weighted kernel
$\FE=\mathcal{C}_{P}v(\Omega,E)$ of a hypoelliptic linear partial differential operator 
from \prettyref{cor:subspace_cont_hypo_mixed} even for quasi-complete locally convex Hausdorff spaces $E$. 
However, the proof is different. \prettyref{thm:linearisation_full_mixed} also allows us to characterise $(\F,\gamma)$ 
having the approximation property by approximation in $(\FE_{\sigma},\gamma_{\sigma,\operatorname{s}}^{E})$.

\begin{cor}\label{cor:approx_prop_vector_valued}
Let $(\F,\|\cdot\|,\tau)$ be a semi-Montel Saks space of $\K$-valued functions on a non-empty set $\Omega$ 
such that $\tau_{\operatorname{p}}\leq\tau$. Then the following assertions are equivalent.
\begin{enumerate}
\item $(\F,\gamma)$ has the approximation property.
\item $(\F_{\gamma}',\|\cdot\|_{\F_{\gamma}'})$ has the approximation property.
\item $\F\otimes E$ is dense in $(\FE_{\sigma},\gamma_{\sigma,\operatorname{s}}^{E})$ for every Banach space $E$ over $\K$.
\item $\F\otimes E$ is dense in $(\FE_{\sigma},\gamma_{\sigma,\operatorname{s}}^{E})$ for every complete locally 
convex Hausdorff space $E$ over $\K$.
\end{enumerate}
\end{cor}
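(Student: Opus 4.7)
The plan is to reduce the corollary to the defining condition of the approximation property by combining \prettyref{thm:linearisation_full_mixed} with \prettyref{prop:approx_prop}.

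The equivalence (a)$\Leftrightarrow$(b) is precisely \prettyref{prop:approx_prop} applied to the present semi-Montel pre-Saks space, so no further work is needed there.

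For (a)$\Leftrightarrow$(c), I would fix a Banach space $E$ and invoke \prettyref{thm:linearisation_full_mixed} to get the topological isomorphism
\[
\chi\colon \F_{\gamma}\varepsilon E \longrightarrow (\FE_{\sigma},\gamma_{E}),\quad \chi(T)=T\circ\delta.
\]
The key observation to verify is that $\chi$ maps the algebraic tensor product $\F\otimes E$ (identified with the finite rank operators inside $\F_{\gamma}\varepsilon E$ as recalled in \prettyref{sect:notions}) bijectively onto the canonical copy of $\F\otimes E$ sitting inside $\FE_{\sigma}$: for $T=\sum_{i=1}^{n} f_{i}\otimes e_{i}$ with $f_{i}\in\F$ and $e_{i}\in E$ one has $T(f')=\sum_{i=1}^{n}f'(f_{i})e_{i}$ for $f'\in\F_{\gamma}'$, hence $\chi(T)(x)=T(\delta_{x})=\sum_{i=1}^{n}f_{i}(x)e_{i}$. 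Since by definition $(\F,\gamma)$ has the approximation property if and only if $\F\otimes E$ is dense in $\F_{\gamma}\varepsilon E$ for every Banach space $E$, the topological isomorphism $\chi$ transfers this density exactly to (c) and conversely.

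The implication (d)$\Rightarrow$(c) is immediate since every Banach space is a complete locally convex Hausdorff space. For (a)$\Rightarrow$(d), I would use the standard strengthening of the Schwartz approximation property, namely that density of $X\otimes E$ in $X\varepsilon E$ for all Banach spaces $E$ already forces the same density for every complete locally convex Hausdorff space $E$; this is the content of \cite[Corollary 1.3, p.~144]{dineen2004}, which is already exploited in the proof of \prettyref{prop:approx_prop}. Applying $\chi$ once more yields (d). The only step needing slight care is the identification of $\chi(\F\otimes E)$ with the natural copy of $\F\otimes E$ inside $\FE_{\sigma}$; once this is in place the corollary reduces almost mechanically to the definition of the approximation property together with \prettyref{prop:approx_prop} and \prettyref{thm:linearisation_full_mixed}, and no delicate analysis beyond what those results provide is required.
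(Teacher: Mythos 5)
Your argument is correct and follows essentially the same route as the paper's (very terse) proof: (a)$\Leftrightarrow$(b) via \prettyref{prop:approx_prop}, and the remaining equivalences by transporting the standard characterisation of the approximation property (density of $X\otimes E$ in $X\varepsilon E$ for all Banach, equivalently all complete locally convex Hausdorff, spaces $E$) through the isomorphism $\chi$ of \prettyref{thm:linearisation_full_mixed}; your explicit verification that $\chi$ carries the finite-rank operators onto the canonical copy of $\F\otimes E$ in $\FE_{\sigma}$ is exactly the point the paper leaves implicit. The only slip is the citation for passing from Banach to complete $E$: that equivalence is the content of \cite[Satz 10.17, p.~250]{kaballo2014} (which the paper uses), not of \cite[Corollary 1.3, p.~144]{dineen2004}, which concerns the duality of the approximation property and enters only in \prettyref{prop:approx_prop}.
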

\begin{proof}
The equivalence (a)$\Leftrightarrow$(b) follows from \prettyref{prop:approx_prop}. The remaining equivalences 
are a consequence of \prettyref{thm:linearisation_full_mixed} and \cite[Satz 10.17, p.~250]{kaballo2014}.
\end{proof}

\cite[Theorem 4.6, p.~651--652]{vargas2018} is a specical case of \prettyref{cor:approx_prop_vector_valued} 
for $\F=\operatorname{Lip}_{0}(\Omega)$. For the space $\F=H^{\infty}(\Omega)=\mathcal{H}v(\Omega)$, 
$v(z)\coloneqq 1$ for $z\in\Omega$, of bounded holomorphic $\C$-valued functions on a balanced bounded open subset $\Omega$ 
of a complex Banach space from \prettyref{cor:subspace_cont_holo_mixed} the statement of \prettyref{cor:approx_prop_vector_valued} 
is contained in \cite[5.4 Theorem, p.~883]{mujica1991}.
Further, it is known that the spaces $(H^{\infty}(\Omega),\gamma)$ and 
$(H^{\infty}(\Omega)_{\gamma}',\|\cdot\|_{H^{\infty}(\Omega)_{\gamma}'})$ 
with $\gamma=\gamma(\|\cdot\|,\tau_{\operatorname{co}})$ have the approximation property by 
\cite[Satz 3.9, p.~145]{bierstedt1973b} for simply connected open $\Omega\subset\C$ 
(cf.~\cite[V.2.4 Proposition, p.~233]{cooper1978} for $\Omega=\D\coloneqq\{z\in\C\;|\;|z|<1\}$). 
The same is true for $(\mathcal{C}v(\Omega),\gamma)$ and $(\mathcal{C}v(\Omega)_{\gamma}',\|\cdot\|_{\mathcal{C}v(\Omega)_{\gamma}'})$ by \cite[5.5 Theorem (3), (4), p.~205]{bierstedt1973a} if $\Omega$ is discrete.

We close this section with an application of \prettyref{thm:linearisation_full_mixed} to some spaces of integrable 
holomorphic functions. 
We denote by $\mathcal{H}(\D)$ the space of $\C$-valued holomorphic functions on $\D$. 
For $1\leq p<\infty$ the \emph{Hardy space} is defined by 
\[
H^{p}\coloneqq\Bigl\{f\in\mathcal{H}(\D)\;|\;
\|f\|_{p}^{p}\coloneqq \sup_{0<r<1}\frac{1}{2\pi}\int_{0}^{2\pi}|f(r\euler^{\mathrm{i}\theta})|^{p}\d\theta<\infty\Bigr\}
\]
and the \emph{weighted Bergman space} for $\alpha>-1$ by 
\[
A_{\alpha}^{p}\coloneqq\Bigl\{f\in\mathcal{H}(\D)\;|\;
\|f\|_{\alpha,p}^{p}\coloneqq \frac{\alpha+1}{\pi}\int_{\D}|f(z)|^{p}(1-|z|^2)^{\alpha}\d z<\infty\Bigr\}.
\]
The \emph{Dirichlet space} is defined by 
\[
\mathcal{D}\coloneqq\Bigl\{f\in\mathcal{H}(\D)\;|\;
\|f\|_{\mathcal{D}}^2\coloneqq |f(0)|^2+\frac{1}{\pi}\int_{\D}|f'(z)|^2\d z<\infty\Bigr\}.
\]

\begin{cor}\label{cor:hardy_bergman_dirichlet}
Let $(E,\|\cdot\|_{E})$ be a normed space over $\C$ and $(\mathcal{F}(\D),\|\cdot\|)$ be one of the spaces 
$(H^{p},\|\cdot\|_{p})$, $(A_{\alpha}^{p},\|\cdot\|_{\alpha,p})$ for $1\leq p<\infty$ and $\alpha>-1$, or 
$(\mathcal{D},\|\cdot\|_{\mathcal{D}})$. 
\begin{enumerate}
\item $(\mathcal{F}(\D,E)_{\sigma},\|\cdot\|_{\sigma}^{E},\tau_{\operatorname{co},\sigma}^{E})$ is a C-sequential Saks space 
where $\tau_{\operatorname{co},\sigma}^{E}\coloneqq (\tau_{\operatorname{co}})_{\sigma}^{E}$. 
\item If $E$ is a Banach space, then $(\mathcal{F}(\D,E)_{\sigma},\|\cdot\|_{\sigma}^{E},\tau_{\operatorname{co},\sigma}^{E})$ is complete.
\item If $E=\C$, then $(\mathcal{F}(\D),\|\cdot\|,\tau_{\operatorname{co}})$ is semi-Montel and 
$\gamma(\|\cdot\|,\tau_{\operatorname{co}})=\gamma_{\operatorname{s}}(\|\cdot\|,\tau_{\operatorname{co}})$. 
\item If $\mathcal{F}(\D)=H^{p}$, then $\tau_{\operatorname{co}}$ on $H^{p}$ is generated by the directed system of continuous 
seminorms $(|\cdot|_{p,s})_{0<s<1}$ given by 
\[
|f|_{p,s}^{p}\coloneqq \sup_{0<r<s}\frac{1}{2\pi}\int_{0}^{2\pi}|f(r\euler^{\mathrm{i}\theta})|^{p}\d\theta,\quad f\in H^{p},
\]
for $0<s<1$, which fulfils $\|f\|_{p}=\sup_{0<s<1}|f|_{p,s}$ for all $f\in H^{p}$.
\item If $\mathcal{F}(\D)=A_{\alpha}^{p}$, then $\tau_{\operatorname{co}}$ on $A_{\alpha}^{p}$ 
is generated by the directed system of continuous seminorms $(|\cdot|_{\alpha,p,r})_{0<r<1}$ given by 
\[
|f|_{\alpha,p,r}^{p}\coloneqq \frac{\alpha+1}{\pi}\int_{\D_{r}}|f(z)|^{p}(1-|z|^2)^{\alpha}\d z,\quad f\in A_{\alpha}^{p},
\]
for $0<r<1$, which fulfils $\|f\|_{\alpha,p}=\sup_{0<r<1}|f|_{\alpha,p,r}$ for all 
$f\in A_{\alpha}^{p}$, where $\D_{r}\coloneqq \{z\in\C\;|\;|z|<r\}$.
\item If $\mathcal{F}(\D)=\mathcal{D}$, then $\tau_{\operatorname{co}}$ on $\mathcal{D}$ is generated by the directed 
system of continuous seminorms $(|\cdot|_{\mathcal{D},r})_{0<r<1}$ given by 
\[
|f|_{\mathcal{D},r}^2\coloneqq |f(0)|^2+\frac{1}{\pi}\int_{\D_{r}}|f'(z)|^2\d z,
\quad f\in\mathcal{D},
\]
for $0<r<1$, which fufils $\|f\|_{\mathcal{D}}=\sup_{0<r<1}|f|_{\mathcal{D},r}$ for all $f\in\mathcal{D}$. 
\end{enumerate}
\end{cor}
\begin{proof}
(c) In all the cases we note that the $\tau_{\operatorname{co}}$-compactness of 
$B_{\|\cdot\|}$ is obtained from \cite[p.~4--5]{eklund2017} (which uses that $B_{\|\cdot\|}$ is compact in the Montel space 
$(\mathcal{H}(\D),\tau_{\operatorname{co}})$ since $B_{\|\cdot\|}$ is relatively compact there and its closedness 
is a consequence of Fatou's lemma). 
It follows that $(\mathcal{F}(\D),\|\cdot\|,\tau_{\operatorname{co}})$ is a semi-Montel Saks space 
by \prettyref{rem:pre_Saks_unit_ball_char} (a). Further, condition (ii) of \prettyref{rem:mixed=submixed} (b) 
yields that $\gamma(\|\cdot\|,\tau_{\operatorname{co}})=\gamma_{\operatorname{s}}(\|\cdot\|,\tau_{\operatorname{co}})$. 

(a)+(b) Due to part (c) and $\tau_{\operatorname{p}}\leq\tau_{\operatorname{co}}$ we have that 
$(\mathcal{F}(\D,E)_{\sigma},\|\cdot\|_{\sigma}^{E},\tau_{\operatorname{co},\sigma}^{E})$ is a Saks space, which is complete if $E$ is 
a Banach space, by \prettyref{rem:weak_vector_valued_Saks} and \prettyref{thm:linearisation_full_mixed}. 
Since the countable system of seminorms  
\[
|f|_{n}^{E}\coloneqq\sup_{e^{\ast}\in B_{\|\cdot\|_{E^{\ast}}}}\sup_{z\in\overline{\D_{1-(1/n)}}}|e^{\ast}(f(z))|,
\quad f\in \mathcal{F}(\D,E)_{\sigma},
\]
for $n\in\N$, $n\geq 2$, generates $\tau_{\operatorname{co},\sigma}^{E}$ on $\mathcal{F}(\D,E)_{\sigma}$, 
we get that $\tau_{\operatorname{co},\sigma}^{E}$ is metrisable on $\mathcal{F}(\D,E)_{\sigma}$. 
Hence  $(\mathcal{F}(\D,E)_{\sigma},\|\cdot\|_{\sigma}^{E},\tau_{\operatorname{co},\sigma}^{E})$ is C-sequential by 
\cite[Proposition 5.7, p.~2681--2682]{kruse_meichnser_seifert2018}. 

(d) For $0<s<1$ we have
\[
|f|_{p,s}^{p}=\sup_{0<r<s}\frac{1}{2\pi}\int_{0}^{2\pi}|f(r\euler^{\mathrm{i}\theta})|^{p}\d\theta
\leq \sup_{0\leq r<s}\sup_{\theta\in[0,2\pi]}|f(r\euler^{\mathrm{i}\theta})|^{p}
=\bigl(\sup_{z\in\D_{s}}|f(z)|\bigr)^{p}
\]
as well as $\|f\|_{p}=\sup_{0<s<1}|f|_{p,s}$ for all $f\in H^{p}$. 
Furthermore, for $0<s<r<1$ we remark that
\begin{align*}
 |f(z)|^{p}
&=\Bigl|f\Bigl(r\frac{z}{r}\Bigr)\Bigr|^{p}
\leq \frac{1}{2\pi}\int_{0}^{2\pi}|f(r\euler^{\mathrm{i}\theta})|^{p}\d\theta\frac{1}{1-\frac{|z|^2}{r^2}}
\leq \frac{1}{2\pi\left(1-\frac{s^2}{r^2}\right)}\int_{0}^{2\pi}|f(r\euler^{\mathrm{i}\theta})|^{p}\d\theta\\
&\leq  \frac{1}{2\pi\left(1-\frac{s^2}{r^2}\right)}|f|_{p,r}^{p}
\end{align*}
for all $z\in\D_{s}$ and $f\in H^{p}$ by the proof of \cite[Theorem 9.1, p.~253]{zhu2007}. 
It follows that $\tau_{\operatorname{co}}$ on $H^{p}$ is generated by the directed system of continuous seminorms 
$(|\cdot|_{p,s})_{0<s<1}$. 

(e) We have $\|f\|_{\alpha,p}=\sup_{0<r<1}|f|_{\alpha,p,r}$ and 
\[
    |f|_{\alpha,p,r}^{p}
\leq\frac{\alpha+1}{\pi}\max_{z\in\overline{\D}_{r}}(1-|z|^2)^{\alpha}\bigl(\sup_{z\in\D_{r}}|f(z)|\bigr)^{p}
\]
for all $f\in A_{\alpha}^{p}$ and $0<r<1$. Now, for $0<r<1$ we choose $0<s<1-r$. 
We deduce from the mean value equality for holomorphic functions and H\"older's inequality that 
\begin{align*}
 |f(z)|
&=\frac{1}{\pi s^2}\Bigl|\int_{\D_{s}(z)}f(w)\d w\Bigr|
 \leq \frac{(\pi s^2)^{\frac{1}{q}}}{\pi s^2}\Bigl(\int_{\D_{s}(z)}|f(w)|^{p}\d w\Bigr)^{\frac{1}{p}}\\
&\leq (\pi s^2)^{\frac{1}{q}-1}
     \Bigl(\int_{\D_{r+s}}|f(w)|^{p}\frac{(1-|w|^{2})^{\alpha}}{(1-|w|^{2})^{\alpha}}\d w\Bigr)^{\frac{1}{p}}\\
&\leq \frac{\pi(\pi s^2)^{\frac{1}{q}-1}}{\alpha+1}\max_{w\in\overline{\D}_{r+s}}(1-|w|^{2})^{-\frac{\alpha}{p}}
      |f|_{\alpha,p,r+s}
\end{align*}
for all $z\in\D_{r}$ and $f\in A_{\alpha}^{p}$ with $\D_{s}(z)\coloneqq \{w\in\C\;|\;|w-z|<s\}$ and 
$1<q\leq\infty$ such that $\frac{1}{p}+\frac{1}{q}=1$. 
Thus $\tau_{\operatorname{co}}$ on $A_{\alpha}^{p}$ is generated by the directed system of continuous seminorms $(|\cdot|_{\alpha,p,r})_{0<r<1}$. 

(f) We observe that $\|f\|_{\mathcal{D}}=\sup_{0<r<1}|f|_{\mathcal{D},r}$ for all $f\in\mathcal{D}$. 
Moreover, we obtain by H\"older's inequality that
\begin{align*}
     |f(z)|
&\leq |f(0)|+\bigl|\int_{0}^{z}f'(w)\d w\bigr|
 \leq |f(0)|+\int_{\D_{r}}|f'(w)|\d w\\
&\leq |f(0)|+(\pi r^2)^{\frac{1}{2}}\Bigl(\int_{\D_{r}}|f'(w)|^2\d w\Bigr)^{\frac{1}{2}}
 \leq 2^{\frac{1}{2}}\Bigl(|f(0)|^2+\pi r^2\int_{\D_{r}}|f'(w)|^2\d w\Bigr)^{\frac{1}{2}}\\
&\leq 2^{\frac{1}{2}}\pi r |f|_{\mathcal{D},r}
\end{align*}
for all $z\in\D_{r}$ and $f\in\mathcal{D}$. 
Now, for $0<r<1$ we choose $0<s<1-r$. From Cauchy's inequality we deduce the estimate 
\begin{align*}
 |f|_{\mathcal{D},r}^2
&\leq |f(0)|^2+\frac{1}{\pi}\int_{\D_{r}}|f'(z)|^2\d z
 \leq |f(0)|^2+r^2\Bigl(\sup_{z\in\D_{r}}|f'(z)|\Bigr)^2 \\
&\leq |f(0)|^2+\frac{r^2}{s^2}\Bigl(\sup_{z\in\D_{r}}\max_{w\in\C,\,|w-z|=s}|f(w)|\Bigr)^2  
 \leq \Bigl(1+\frac{r^2}{s^2}\Bigr)\Bigl(\sup_{z\in\D_{r+s}}|f(z)|\Bigr)^2
\end{align*}
for all $f\in\mathcal{D}$, implying that $\tau_{\operatorname{co}}$ on $\mathcal{D}$ is generated by the directed system of 
continuous seminorms $(|\cdot|_{\mathcal{D},r})_{0<r<1}$. 
\end{proof}

\section{Saks spaces of vector-valued functions}
\label{sect:strong_Saks_function}

This section is dedicated to Saks spaces $\FE$ of vector-valued functions which are often stronger than the spaces $\FE_{\sigma}$ 
of weak vector-valued functions from the preceding section (see \prettyref{prop:relation_weak_strong}). 
In order to derive certain systems of seminorms on such spaces which generate the mixed topology we need to recall some 
results for \emph{completely regular} Hausdorff spaces $\Omega$ (see \cite[Definition 11.1, p.~180]{james1999}). 
Examples of completely regular Hausdorff spaces are metrisable spaces by \cite[Proposition 11.5, p.~181]{james1999} and 
locally convex Hausdorff spaces by \cite[Proposition 3.27, p.~95]{fabian2011}. 
Further, every subspace of a completely regular Hausdorff space is completely regular and Hausdorff as well. 
For a completely regular Hausdorff space $\Omega$ we denote by $\mathcal{W}_{\operatorname{b},0}^{+}(\Omega)$ the family 
of all bounded functions $w\colon \Omega \to [0,\infty)$ that \emph{vanish at infinity}, 
i.e.~for every $\varepsilon >0$ the set $\{x\in\Omega\;|\;w(x)\geq \varepsilon\}$ is compact. 
Further, we denote by $\mathcal{W}_{\operatorname{usc},0}^{+}(\Omega)$ resp.~$\mathcal{C}_{0}^{+}(\Omega)$ 
the family of all upper semicontinuous resp.~continuous functions $w\colon \Omega \to [0,\infty)$ that vanish at infinity. 
We note that $\mathcal{C}_{0}^{+}(\Omega)\subset\mathcal{W}_{\operatorname{usc},0}^{+}(\Omega)
\subset\mathcal{W}_{\operatorname{b},0}^{+}(\Omega)$ 
because upper semicontinuous functions are bounded on compact sets. 
By the proofs of \cite[II.1.11 Proposition, p.~82--83]{cooper1978} and \cite[Proposition 3, p.~590]{cooper1971} 
we have the following proposition. 

\begin{prop}\label{prop:weight_strict_topology}
Let $\Omega$ be a completely regular Hausdorff space, $(K_{n})_{n\in\N}$ a strictly increasing sequence of compact subsets 
of $\Omega$ and $(a_{n})_{n\in\N}$ a strictly decreasing positive null-sequence. 
Then there is $w\in\mathcal{W}_{\operatorname{usc},0}^{+}(\Omega)$ such that 
$\operatorname{supp} w\subset \bigcup_{n\in\N}K_{n}$ and
$w(x)=a_{1}$ for $x\in K_{1}$ and $a_{n+1}\leq w(x)\leq a_{n}$ for $x\in K_{n+1}\setminus K_{n}$ and $n\in\N$. 
If $\Omega$ is locally compact and $K_{n}\subset\mathring{K}_{n+1}$ for every $n\in\N$, then we may choose 
$w\in\mathcal{C}_{0}^{+}(\Omega)$. 
\end{prop}

Here, $\operatorname{supp} w$ denotes the support of $w$ and $\mathring{K}_{n+1}$ the set of inner points of $K_{n+1}$. 

\begin{defn}\label{defn:standard_space}
Let $\Omega$ and $\Lambda$ be non-empty sets, $v\colon\Lambda\to (0,\infty)$, $(E,\|\cdot\|_{E})$ a normed space 
over $\K$, $\GE$ a linear subspace of $E^{\Omega}$, 
$q^{E}\colon\GE\to[0,\infty)$ a seminorm and $T^{E}\colon \GE\to E^{\Lambda}$ a linear map 
where $E^{\Lambda}$ denotes the space of functions from $\Lambda$ to $E$. 
We define the space
\[
\FVE\coloneqq\{f\in \GE\;|\;\|f\|^{E}\coloneqq q^{E}(f)+\sup_{x \in \Lambda}\|T^{E}(f)(x)\|_{E}v(x)<\infty\}.
\]
If $E=\K$, we write $\G\coloneqq \mathcal{G}(\Omega,\K)$, $\FV\coloneqq \mathcal{F}v(\Omega,\K)$, $q\coloneqq q^{\K}$ and 
$T\coloneqq T^{\K}$ and $\|\cdot\|\coloneqq\|\cdot\|^{\K}$. 
If we want to emphasize dependencies, we write $\|f\|_{\FVE}$ instead of $\|f\|^{E}$.
\end{defn}

For a non-empty set $\Lambda$ we denote by $\mathcal{N}_{\Lambda}$ the familiy of finite subsets of $\Lambda$. 
If $\Lambda$ is a topological space, we denote by $\mathcal{K}_{\Lambda}$ the family of compact subsets of $\Lambda$. 

\begin{thm}\label{thm:standard_saks_space}
Let $\Omega$ and $\Lambda$ be non-empty sets, $v\colon\Lambda\to (0,\infty)$, $(E,\|\cdot\|_{E})$ a normed space over $\K$, 
$\GE$ a linear subspace of $E^{\Omega}$, $q^{E}\colon\GE\to[0,\infty)$ a seminorm, $T^{E}\colon \GE\to E^{\Lambda}$ a linear map 
and suppose that $(\FVE,\|\cdot\|^{E})$ is normed. 
Let $\mathcal{S}$ be a family of subsets of $\Lambda$ such that $\mathcal{S}$ is closed under finite unions, 
$\Lambda=\bigcup_{S\in\mathcal{S}}S$ and denote by 
$\tau_{\mathcal{S}}^{E}$ the locally convex Hausdorff topology generated by the directed system of seminorms
\[
q_{S}^{E}(f)\coloneqq q^{E}(f)+\sup_{x\in S}\|T^{E}(f)(x)\|_{E}v(x),\quad f\in\FVE,
\]
for $S\in\mathcal{S}$. Then the following assertions hold.
\begin{enumerate}
\item $(\FVE,\|\cdot\|^{E},\tau_{\mathcal{S}}^{E})$ is a Saks space. 
\item If there is a sequence $(S_{n})_{n\in\N}$ in $\mathcal{S}$ such that for every $S\in\mathcal{S}$ there is $N\in\N$ 
with $S\subset S_{N}$, then $(\FVE,\|\cdot\|^{E},\tau_{\mathcal{S}}^{E})$ is C-sequential. 
\item The submixed topology $\gamma_{\operatorname{s}}(\|\cdot\|^{E},\tau_{\mathcal{S}}^{E})$ is generated by the 
system of seminorms
\[
\vertiii{f}_{(S_{n},a_{n})_{n\in\N}}^{E}\coloneqq\sup_{n\in\N}\sup_{x\in S_{n}}(q^{E}(f)+\|T^{E}(f)(x)\|_{E}v(x))a_{n},\quad f\in \FVE,
\]
where $(S_{n})_{n\in\N}$ is a sequence in $\mathcal{S}$ and $(a_{n})_{n\in\N}\in c_{0}^{+}$.
\item If $\mathcal{S}=\mathcal{N}_{\Lambda}$, then $\gamma_{\operatorname{s}}(\|\cdot\|^{E},\tau_{\mathcal{N}_{\Lambda}}^{E})$ is generated by the system of seminorms
\[
\vertiii{f}_{(x_{n},a_{n})_{n\in\N}}^{E}\coloneqq\sup_{n\in\N}(q^{E}(f)+\|T^{E}(f)(x_{n})\|_{E}v(x_{n}))a_{n},\quad f\in \FVE,
\]
where $(x_{n})_{n\in\N}$ is a sequence in $\Lambda$ and $(a_{n})_{n\in\N}\in c_{0}^{+}$.
\item Let $\Lambda$ be a completely regular Hausdorff space and set
$\mathcal{W}_{0}\coloneqq\mathcal{W}_{\operatorname{b},0}^{+}(\Lambda)$ or $\mathcal{W}_{\operatorname{usc},0}^{+}(\Lambda)$. 
If $\mathcal{S}=\mathcal{K}_{\Lambda}$ and $\gamma(\|\cdot\|^{E},\tau_{\mathcal{K}_{\Lambda}}^{E})=\gamma_{\operatorname{s}}(\|\cdot\|^{E},\tau_{\mathcal{K}_{\Lambda}}^{E})$, then the mixed topology 
$\gamma(\|\cdot\|^{E},\tau_{\mathcal{K}_{\Lambda}}^{E})$ is generated by the system of seminorms
\[
|f|_{w}^{E}\coloneqq\sup_{x\in\Lambda}(q^{E}(f)+\|T^{E}(f)(x)\|_{E}v(x))w(x),\quad f \in\FVE,
\]
for $w\in\mathcal{W}_{0}$. If $\Lambda$ is locally compact, we may replace $\mathcal{W}_{0}$ by $\mathcal{C}_{0}^{+}(\Lambda)$. 
\end{enumerate}
\end{thm}
\begin{proof}
(a) First, we note that the system of seminorms $(q_{S}^{E})_{S\in\mathcal{S}}$ is directed and Hausdorff 
since $\mathcal{S}$ is closed under finite unions, $\|\cdot\|^{E}$ a norm by assumption and
\[
 \|f\|^{E}=q^{E}(f)+\sup_{S\in\mathcal{S}}q_{S}^{E}(f)
\]
for all $f\in\FVE$ because $\Lambda=\bigcup_{S\in\mathcal{S}}S$. 
Hence $\tau_{\mathcal{S}}^{E}$ is a locally convex Hausdorff topology with 
$\tau_{\mathcal{S}}^{E}\leq \tau_{\|\cdot\|^{E}}$ and $(\FVE,\|\cdot\|^{E},\tau_{\mathcal{S}}^{E})$ a Saks space.

(b) The countable system of seminorms $(q_{S_{n}})_{n\in\N}$ generates $\tau_{\mathcal{S}}^{E}$ by our assumption on $\mathcal{S}$. 
Hence $\tau_{\mathcal{S}}^{E}$ is metrisable and so $(\FVE,\gamma(\|\cdot\|^{E},\tau_{\mathcal{S}}^{E}))$ C-sequential by 
\cite[Proposition 5.7, p.~2681--2682]{kruse_meichnser_seifert2018}.

(c) This follows from part (a) and the definition of $\gamma_{\operatorname{s}}(\|\cdot\|^{E},\tau_{\mathcal{S}}^{E})$.

(d) Let $(a_{n})_{n\in\N}\in c_{0}^{+}$ and $(S_{n})_{n\in\N}$ be a sequence of finite subsets of $\Lambda$ with cardinality 
$m_{n}\coloneqq|S_{n}|$ for $n\in\N$. 
Then every $S_{n}$, $n\in\N$, is of the form $S_{n}=\{s^{n}_{1},\ldots,s^{n}_{m_{n}}\}$ with distinct elements 
$s^{n}_{i}\in\Lambda$ for $1\leq i\leq m_{n}$. We set $x_{i}\coloneqq s^{1}_{i}$ for $1\leq i\leq m_{1}$. Further, for $n\in\N$ 
we set $j_{n}\coloneqq \sum_{l=1}^{n}m_{l}$ and $x_{j_{n}+i}\coloneqq s^{n+1}_{i}$ for $1\leq i\leq m_{n+1}$. 
Moreover, we set $b_{i}\coloneqq a_{1}$ for $1\leq i\leq m_{1}$. For $n\in\N$ 
we set $b_{j_{n}+i}\coloneqq a_{n+1}$ for $1\leq i\leq m_{n+1}$. Then we have $(b_{n})_{n\in\N}\in c_{0}^{+}$ and  
$\vertiii{f}_{(S_{n},a_{n})_{n\in\N}}^{E}=\vertiii{f}_{(x_{n},b_{n})_{n\in\N}}^{E}$ for all $f\in\FVE$. 
On the other hand, $\vertiii{f}_{(\{z_{n}\},a_{n})_{n\in\N}}^{E}=\vertiii{f}_{(z_{n},a_{n})_{n\in\N}}^{E}$ for all $f\in\FVE$ 
and every sequence $(z_{n})_{n\in\N}$ in $\Lambda$. Thus statement (d) follows from part (c). 

(e) We denote by $\omega_{\operatorname{b}}^{E}$ and $\omega_{\operatorname{usc}}^{E}$ the locally convex Hausdorff topologies 
generated by $(|\cdot|_{w}^{E})_{w\in\mathcal{W}_{\operatorname{b},0}^{+}(\Lambda)}$ 
and $(|\cdot|_{w}^{E})_{w\in\mathcal{W}_{\operatorname{usc},0}^{+}(\Lambda)}$, respectively. 
First, we prove that the identity map $\id\colon(\FVE,\gamma(\|\cdot\|^{E},\tau_{\mathcal{K}_{\Lambda}}^{E}))\to 
(\FVE,\omega_{\operatorname{b}}^{E})$ is continuous. 
Due to \cite[I.1.7 Corollary, p.~8]{cooper1978} and \cite[I.1.8 Lemma, p.~8]{cooper1978} 
we only need to prove that its restriction to $B_{\|\cdot\|^{E}}$ is $\tau_{\mathcal{K}_{\Lambda}}^{E}$-continuous at zero. 
Let $\varepsilon >0$, $w\in\mathcal{W}_{\operatorname{b},0}^{+}(\Lambda)$ and set 
$V\coloneqq\{f\in\FVE\;|\;|f|_{w}^{E}\leq\varepsilon\}$. 
Then there is a compact set $K\subset\Lambda$ such that $w(x)<\frac{\varepsilon}{2}$ for $x\in\Lambda\setminus K$. 
We define $U\coloneqq \{f\in\FVE\;|\;q_{K}^{E}(f)\leq\frac{\varepsilon}{2(1+\|w\|_{\infty})}\}$ 
where $\|w\|_{\infty}\coloneqq\sup_{x\in\Lambda}w(x)$ and note that for all $f\in U\cap B_{\|\cdot\|^{E}}$ it holds that
\begin{align*}
     |f|_{w}^{E}
&\leq \sup_{x\in\Lambda\setminus K}(q^{E}(f)+\|T^{E}(f)(x)\|_{E}v(x))w(x)\\
&\phantom{\leq} +\sup_{x\in K}(q^{E}(f)+\|T^{E}(f)(x)\|_{E}v(x))w(x)\\
&\leq \frac{\varepsilon}{2}\|f\|^{E}+\|w\|_{\infty}q_{K}^{E}(f)
\leq \frac{\varepsilon}{2}+\|w\|_{\infty}\frac{\varepsilon}{2(1+\|w\|_{\infty})}\leq \varepsilon,
\end{align*}
yielding $(U\cap B_{\|\cdot\|^{E}})\subset V$ and so the continuity of $\id$. 

Second, we prove that $\id\colon(\FVE,\omega_{\operatorname{usc}}^{E})\to(\FVE,\gamma(\|\cdot\|^{E},\tau_{\mathcal{K}_{\Lambda}}^{E}))$ is continuous. 
Let $(K_{n})_{n\in\N}$ be a sequence of compact subsets of $\Lambda$ and $(a_{n})_{n\in\N}\in c_{0}^{+}$. 
W.l.o.g.~$K_{n}\subset K_{n+1}$ and $0<a_{n+1}<a_{n}$ for all $n\in\N$.  
Then there is $w\in\mathcal{W}_{\operatorname{usc},0}^{+}(\Lambda)$ with $\operatorname{supp} w\subset \bigcup_{n\in\N}K_{n}$ 
such that $w(x)=a_{1}$ for $x\in K_{1}$ and $a_{n+1}\leq w(x)\leq a_{n}$ for $x\in K_{n+1}\setminus K_{n}$ 
by \prettyref{prop:weight_strict_topology}. 
It follows that
\[
\vertiii{f}_{(K_{n},a_{n})_{n\in\N}}^{E}
\leq\sup_{x\in\Lambda}(q^{E}(f)+\|T^{E}(f)(x)\|_{E}v(x))w(x)=|f|_{w}^{E}
\]
for all $f\in\FVE$, which yields the continuity of $\id$ by part (c) and the assumption 
$\gamma(\|\cdot\|^{E},\tau_{\mathcal{K}_{\Lambda}}^{E})=\gamma_{\operatorname{s}}(\|\cdot\|^{E},\tau_{\mathcal{K}_{\Lambda}}^{E})$. 
Due to $\mathcal{C}_{0}^{+}(\Lambda)\subset\mathcal{W}_{\operatorname{usc},0}^{+}(\Lambda)
\subset\mathcal{W}_{\operatorname{b},0}^{+}(\Lambda)$ and \prettyref{prop:weight_strict_topology} this proves statement (e).
\end{proof}

The proof of \prettyref{thm:standard_saks_space} (e) is just an adaptation of the proofs of 
\cite[Proposition 3, p.~590]{cooper1971} and \cite[II.1.11 Proposition, p.~82]{cooper1978}. 
In the next proposition we show that the space $\FVE$ from \prettyref{defn:standard_space} is a linear subspace of 
$\FVE_{\sigma}$ under some mild assumptions and we use the topology $\tau_{\mathcal{S}}^{E}$ 
defined in \prettyref{thm:standard_saks_space}.

\begin{prop}\label{prop:relation_weak_strong}
Let $\Omega$ and $\Lambda$ be non-empty sets, $v\colon\Lambda\to (0,\infty)$, $(E,\|\cdot\|_{E})$ a normed space over $\K$, 
$\G$ a linear subspace of $\K^{\Omega}$, $\GE$ a linear subspace of $E^{\Omega}$, $q\colon\G\to[0,\infty)$ 
and $q^{E}\colon\GE\to[0,\infty)$ seminorms, $T\colon \G\to\K^{\Lambda}$ and $T^{E}\colon \GE\to E^{\Lambda}$ linear maps 
and $(\FV,\|\cdot\|)$ a normed space such that 
\begin{enumerate}
\item[(i)] $e^{\ast}\circ f\in\FV$ and $(e^{\ast}\circ T^{E})(f)=T(e^{\ast}\circ f)$ for all $e^{\ast}\in E^{\ast}$ and $f\in\FVE$,
\item[(ii)] $q^{E}(f)=\sup_{e^{\ast}\in B_{\|\cdot\|_{E^{\ast}}}}q(e^{\ast}\circ f)$ for all $f\in\FVE$.
\end{enumerate}
Then the following assertions hold.
\begin{enumerate}
\item $\FVE$ is a linear subspace of $\FVE_{\sigma}$ and $\|\cdot\|_{\sigma}^{E}\leq\|\cdot\|^{E}\leq 2\|\cdot\|_{\sigma}^{E}$ 
on $\FVE$. In particular, $\|\cdot\|^{E}$ is a norm on $\FVE$. If $q=0$, then $\|\cdot\|_{\sigma}^{E}=\|\cdot\|^{E}$ on $\FVE$.
\end{enumerate}
Suppose for (b)--(d) that $\mathcal{S}$ is a family of subsets of $\Lambda$ such that $\mathcal{S}$ is closed under finite unions and 
$\Lambda=\bigcup_{S\in\mathcal{S}}S$, and $(\FV,\|\cdot\|,\tau_{\mathcal{S}})$ is semi-Montel 
where $\tau_{\mathcal{S}}\coloneqq\tau_{\mathcal{S}}^{\K}$.
\begin{enumerate}
\item[(b)] Then $\gamma_{\operatorname{s}}(\|\cdot\|^{E},\tau_{\mathcal{S}}^{E})
=\gamma_{\operatorname{s}}(\|\cdot\|_{\sigma}^{E},\tau_{\mathcal{S},\sigma}^{E})$ on $\FVE$ where 
$\tau_{\mathcal{S},\sigma}^{E}\coloneqq (\tau_{\mathcal{S}})_{\sigma}^{E}$. 
\item[(c)] If $\tau_{\operatorname{p}}\leq\tau_{\mathcal{S}}$, $\FVE=\FVE_{\sigma}$ (as linear spaces) and $E$ is a Banach space, then $(\FVE,\gamma_{\operatorname{s}}(\|\cdot\|^{E},\tau_{\mathcal{S}}^{E}))$ 
and $(\FVE,\|\cdot\|^{E},\tau_{\mathcal{S}}^{E})$ are complete.
\item[(d)] If $\Lambda$ is a completely regular Hausdorff space, $\mathcal{S}=\mathcal{K}_{\Lambda}$ and 
$\mathcal{W}_{0}\coloneqq\mathcal{W}_{\operatorname{b},0}^{+}(\Lambda)$ or $\mathcal{W}_{\operatorname{usc},0}^{+}(\Lambda)$, 
then $\gamma_{\operatorname{s}}(\|\cdot\|^{E},\tau_{\mathcal{K}_{\Lambda}}^{E})$ is generated by 
the system of seminorms $(|\cdot|_{w}^{E})_{w\in\mathcal{W}_{0}}$. 
If $\Lambda$ is locally compact, we may replace $\mathcal{W}_{0}$ by $\mathcal{C}_{0}^{+}(\Lambda)$. 
\end{enumerate}
\end{prop}
\begin{proof}
(a) Due to the first part of condition (i) we obtain that $\FVE$ is a linear subspace of $\FVE_{\sigma}$. 
The second part of condition (i) implies that 
\begin{align*}
 \sup_{x\in\Lambda}\|T^{E}(f)(x)\|_{E}v(x)
&=\sup_{x\in\Lambda}\sup_{e^{\ast}\in B_{\|\cdot\|_{E^{\ast}}}}|(e^{\ast}\circ T^{E})(f)(x)|v(x)\\
&=\sup_{e^{\ast}\in B_{\|\cdot\|_{E^{\ast}}}}\sup_{x\in\Lambda}|T(e^{\ast}\circ f)(x)|v(x)
\end{align*}
for all $f\in\FVE$. Together with condition (ii) this yields that 
$\|\cdot\|_{\sigma}^{E}\leq\|\cdot\|^{E}\leq 2\|\cdot\|_{\sigma}^{E}$ on $\FVE$, and 
$\|\cdot\|_{\sigma}^{E}=\|\cdot\|^{E}$ on $\FVE$ if additionally $q=0$. Since $\|\cdot\|_{\sigma}^{E}$ is a norm on 
$\FVE_{\sigma}$, we get that $\|\cdot\|^{E}$ is a norm on $\FVE$. 

(b) Let $(S_{n})_{n\in\N}$ be a sequence in $\mathcal{S}$ and $(a_{n})_{n\in\N}\in c_{0}^{+}$. 
We have by part (a)
\[
 \vertiii{f}_{\sigma,(S_{n},a_{n})_{n\in\N},\|\cdot\|_{E}}
 \underset{\eqref{eq:weak_submixed}}{=}\sup_{e^{\ast}\in B_{\|\cdot\|_{E^{\ast}}}}\vertiii{e^{\ast}\circ f}_{(S_{n},a_{n})_{n\in\N}}^{\K}
 =\sup_{n\in\N}\sup_{e^{\ast}\in B_{\|\cdot\|_{E^{\ast}}}}q_{S_{n}}^{\K}(e^{\ast}\circ f)a_{n}
\]
for all $f\in\FVE$. Using the second part of condition (i) we get as in part (a) that
\begin{align*}
 \sup_{x\in S_{n}}\|T^{E}(f)(x)\|_{E}v(x)=\sup_{e^{\ast}\in B_{\|\cdot\|_{E^{\ast}}}}\sup_{x\in S_{n}}|T(e^{\ast}\circ f)(x)|v(x)
\end{align*}
for all $f\in\FVE$. In combination with condition (ii) we obtain the estimates 
$\vertiii{\cdot}_{\sigma,(S_{n},a_{n})_{n\in\N},\|\cdot\|_{E}}\leq \vertiii{\cdot}_{(S_{n},a_{n})_{n\in\N}}^{E}\leq 
2\vertiii{\cdot}_{\sigma,(S_{n},a_{n})_{n\in\N},\|\cdot\|_{E}}$ on $\FVE$, which imply
$\gamma_{\operatorname{s}}(\|\cdot\|^{E},\tau_{\mathcal{S}}^{E})
=\gamma_{\operatorname{s}}(\|\cdot\|_{\sigma}^{E},\tau_{\mathcal{S},\sigma}^{E})$ on $\FVE$ by 
\prettyref{rem:weak_vector_valued_Saks} and \prettyref{thm:standard_saks_space} (b).

(c) The completeness of $(\FVE,\gamma_{\operatorname{s}}(\|\cdot\|^{E},\tau_{\mathcal{S}}^{E}))$ follows from part (b), 
\prettyref{rem:weak_vector_valued_Saks} and \prettyref{thm:linearisation_full_mixed}. The completeness of 
$(\FVE,\|\cdot\|^{E},\tau_{\mathcal{S}}^{E})$ then follows from $\gamma_{\operatorname{s}}(\|\cdot\|^{E},\tau_{\mathcal{S}}^{E})\leq\gamma(\|\cdot\|^{E},\tau_{\mathcal{S}}^{E})$ by \prettyref{rem:mixed=submixed} (a). 

(d) As $(\FV,\|\cdot\|,\tau_{\mathcal{K}_{\Lambda}})$ is semi-Montel, we know that 
$\gamma(\|\cdot\|,\tau_{\mathcal{K}_{\Lambda}})=\gamma_{\operatorname{s}}(\|\cdot\|,\tau_{\mathcal{K}_{\Lambda}})$ 
by \prettyref{rem:pre_Saks_unit_ball_char} (a) and condition (ii) of \prettyref{rem:mixed=submixed} (b). 
Due to \prettyref{thm:standard_saks_space} (d) $\gamma(\|\cdot\|,\tau_{\mathcal{K}_{\Lambda}})$ 
is generated by the system of seminorms
\[
|f|_{w}^{\K}=\sup_{x\in\Lambda}(q(f)+|T(f)(x)|v(x))w(x),\quad f \in\FV,
\]
for $w\in\mathcal{W}_{0}$. We deduce that the system of seminorms given by
\[
|f|_{\sigma,w,\|\cdot\|_{E}}\coloneqq\sup_{e^{\ast}\in B_{\|\cdot\|_{E^{\ast}}}}\sup_{x\in\Lambda}(q(e^{\ast}\circ f)+|T(e^{\ast}\circ f)(x)|v(x))w(x),\quad f \in\FVE_{\sigma},
\]
for $w\in\mathcal{W}_{0}$ generates the topology $\gamma_{\operatorname{s}}(\|\cdot\|_{\sigma}^{E},\tau_{\mathcal{K}_{\Lambda},\sigma}^{E})$ 
on $\FVE_{\sigma}$ by \prettyref{rem:weak_vector_valued_ind_of_choice_seminorms} and \prettyref{rem:weak_vector_valued_Saks}. 
Similar to the proofs of part (a) and (b) we obtain that $|\cdot|_{\sigma,w,\|\cdot\|_{E}}\leq |\cdot|_{w}^{E}\leq 
2|\cdot|_{\sigma,w,\|\cdot\|_{E}}$ on $\FVE$ for every $w\in\mathcal{W}_{0}$, 
yielding that $\gamma_{\operatorname{s}}(\|\cdot\|^{E},\tau_{\mathcal{K}_{\Lambda}}^{E})$ is generated by 
the system of seminorms $(|\cdot|_{w}^{E})_{w\in\mathcal{W}_{0}}$ by part (b).
\end{proof}

Condition (i) of \prettyref{prop:relation_weak_strong} means that the tuple $(T^{E},T)$ is strong for $(\mathcal{F}v,E)$ in the sense 
of \cite[Definition 2.2 (b), p.~4]{kruse2019_3}.
In our first example of this section we consider weighted Saks spaces of continuous vector-valued functions 
on a completely regular Hausdorff space and a sufficient condition for their completeness involves the notion of a $k_{\R}$-space. 
A completely regular space $\Omega$ is called a \emph{$k_{\R}$-space} if for any 
completely regular space $Y$ and any map $f\colon \Omega \to Y$, 
whose restriction to each compact $K\subset\Omega$ is continuous, the map is already continuous on $\Omega$ 
(see \cite[(2.3.7) Proposition, p.\ 22]{buchwalter1969}). Moreover, a topological space $\Omega$ is called a \emph{$k$-space} 
if it fulfils the following condition: $A\subset \Omega$ is closed if and only if 
$A\cap K$ is closed in $K$ for every compact $K\subset\Omega$.  
Examples of Hausdorff $k_{\R}$-spaces are completely regular Hausdorff $k$-spaces by \cite[3.3.21 Theorem, p.\ 152]{engelking1989}. 
In particular, metrisable spaces and \emph{DFM-spaces}, i.e.~strong duals of Fr\'echet--Montel spaces, 
are completely regular Hausdorff $k$-spaces by \cite[3.3.20 Theorem, p.~152]{engelking1989} 
and \cite[4.11 Theorem (5), p.~39]{kriegl1997}, respectively. 
For a non-empty completely regular Hausdorff space $\Omega$, a continuous function $v\colon\Omega\to (0,\infty)$ and a 
normed space $(E,\|\cdot\|_{E})$ over $\K$ we set 
\[
\mathcal{C}v(\Omega,E)\coloneqq \{f\in\mathcal{C}(\Omega,E)\;|\;\|f\|^{E}\coloneqq\sup_{x\in\Omega}\|f(x)\|_{E}v(x)<\infty\}
\]
where $\mathcal{C}(\Omega,E)$ is the space $E$-valued continuous functions on $\Omega$. 
Further, we define $\mathcal{C}v(\Omega)\coloneqq\mathcal{C}v(\Omega,\K)$ and $\|\cdot\|\coloneqq \|\cdot\|^{\K}$.
Setting $\Lambda\coloneqq\Omega$, $\mathcal{G}(\Omega,E)\coloneqq\mathcal{C}(\Omega,E)$, $q^{E}\coloneqq 0$ and 
$T^{E}(f)\coloneqq f$ for $f\in\mathcal{C}(\Omega,E)$, we note that $\FVE=\mathcal{C}v(\Omega,E)$,
$\tau_{\mathcal{K}_{\Omega}}^{E}=\tau_{\operatorname{co}}^{E}$ on $\mathcal{C}v(\Omega,E)$ and conditions (i) and (ii) of 
\prettyref{prop:relation_weak_strong} are fulfilled. 

\begin{cor}\label{cor:cv_mixed}
Let $\Omega$ be a non-empty completely regular Hausdorff space, $v\colon\Omega\to (0,\infty)$ continuous 
and $(E,\|\cdot\|_{E})$ a normed space over $\K$. Then the following assertions hold. 
\begin{enumerate}
\item $(\mathcal{C}v(\Omega,E),\|\cdot\|^{E},\tau_{\operatorname{co}}^{E})$ is a Saks space, $\gamma(\|\cdot\|^{E},\tau_{\operatorname{co}}^{E})=\gamma_{\operatorname{s}}(\|\cdot\|^{E},\tau_{\operatorname{co}}^{E})$ and the mixed topology
$\gamma(\|\cdot\|^{E},\tau_{\operatorname{co}}^{E})$ is generated by the system of seminorms
\[
\vertiii{f}_{(K_{n},a_{n})_{n\in\N}}^{E}\coloneqq\sup_{n\in\N}\sup_{x\in K_{n}}\|f(x)\|_{E}v(x)a_{n},\quad f\in\mathcal{C}v(\Omega,E),
\]
where $(K_{n})_{n\in\N}$ is a sequence of compact subsets of $\Omega$ and $(a_{n})_{n\in\N}\in c_{0}^{+}$.
\item If $\Omega$ is discrete, then $\mathcal{C}v(\Omega,E)=\mathcal{C}v(\Omega,E)_{\sigma}$, 
$\gamma(\|\cdot\|^{E},\tau_{\operatorname{co}}^{E})=\gamma_{\operatorname{s}}(\|\cdot\|^{E},\tau_{\mathcal{N}_{\Omega}}^{E})$ and 
$\gamma(\|\cdot\|^{E},\tau_{\operatorname{co}}^{E})$ is also generated by the system of seminorms
\[
\vertiii{f}_{(x_{n},a_{n})_{n\in\N}}^{E}\coloneqq\sup_{n\in\N}\|f(x_{n})\|_{E}v(x_{n})a_{n},\quad f\in\mathcal{C}v(\Omega,E),
\]
where $(x_{n})_{n\in\N}$ is a sequence in $\Omega$ and $(a_{n})_{n\in\N}\in c_{0}^{+}$.
\item If $\Omega$ is a $k_{\R}$-space and $E$ a Banach space, then 
$(\mathcal{C}v(\Omega,E),\|\cdot\|^{E},\tau_{\operatorname{co}}^{E})$ is complete.
\item If $\Omega$ is \emph{hemicompact}, i.e.~there is a sequence $(K_{n})_{n\in\N}$ in $\mathcal{K}_{\Omega}$ such that for every $K\in\mathcal{K}_{\Omega}$ there is $N\in\N$ with $K\subset K_{N}$, then $(\mathcal{C}v(\Omega,E),\|\cdot\|^{E},\tau_{\operatorname{co}}^{E})$ is C-sequential.
\item If $\Omega$ is a hemicompact $k_{\R}$-space, or a completely metrisable space, then $(\mathcal{C}v(\Omega,E),\gamma(\|\cdot\|^{E},\tau_{\operatorname{co}}^{E}))$ is a Mackey space. If $E$ is in addition a Banach space, then 
$(\mathcal{C}v(\Omega,E),\gamma(\|\cdot\|^{E},\tau_{\operatorname{co}}^{E}))$ is a strong Mackey space.
\item Let $\mathcal{W}_{0}\coloneqq\mathcal{W}_{\operatorname{b},0}^{+}(\Omega)$ or $\mathcal{W}_{\operatorname{usc},0}^{+}(\Omega)$. 
Then $\gamma(\|\cdot\|^{E},\tau_{\operatorname{co}}^{E})$ is also generated by the system of seminorms
\[
|f|_{w}^{E}\coloneqq\sup_{x\in\Omega}\|f(x)\|_{E}v(x)w(x),\quad f \in\mathcal{C}v(\Omega,E),
\]
for $w\in\mathcal{W}_{0}$. If $\Omega$ is locally compact, we may replace $\mathcal{W}_{0}$ by $\mathcal{C}_{0}^{+}(\Omega)$. 
\end{enumerate}
\end{cor}
\begin{proof}
Due \prettyref{thm:standard_saks_space} and our observations above we only need to prove that 
$\gamma(\|\cdot\|^{E},\tau_{\operatorname{co}}^{E})=\gamma_{\operatorname{s}}(\|\cdot\|^{E},\tau_{\operatorname{co}}^{E})$ 
from part (a), and parts (b), (c) and (e). 

(a) We show that condition (i) of \prettyref{rem:mixed=submixed} (b) is fulfilled. We only need to adjust the proof of 
\cite[Example D), p.~65--66]{wiweger1961} to the weighted vector-valued case, which we do for the sake of the reader. 
Let $f\in\mathcal{C}v(\Omega,E)$, $\varepsilon>0$ and $K\subset\Omega$ be compact. Since $\|f(\cdot)\|_{E}v$ is continuous 
on $\Omega$, there is an open set $G\subset\Omega$ with $K\subset G$ such that 
\begin{equation}\label{eq:cv_decomp}
\sup_{x\in G}\|f(x)\|_{E}v(x)\leq \sup_{x\in K}\|f(x)\|_{E}v(x)+\varepsilon =q_{K}^{E}(f)+\varepsilon.
\end{equation}
The set $\Omega\setminus G$ is closed and disjoint with the compact set $K\subset\Omega$. 
By \cite[(2.1.5) Proposition, p.~17]{buchwalter1969} the complete regularity of $\Omega$ implies 
that there is a continuous function $u\colon\Omega\to [0,1]$ such that $u_{\mid K}=0$ and $u_{\mid\Omega\setminus G}=1$. 
Now, we set $g\coloneqq (1-u)f$ and $h\coloneqq uf$ and note that $f=g+h$ and $g,h\in\mathcal{C}v(\Omega,E)$. 
Due to the properties of $u$ we have $q_{K}^{E}(h)=0$ and 
\begin{align*}
 \|g\|^{E}
&=\sup_{x\in \Omega}(1-u(x))\|f(x)\|_{E}v(x)
 =\sup_{x\in G}(1-u(x))\|f(x)\|_{E}v(x)
 \leq \sup_{x\in G}\|f(x)\|_{E}v(x)\\
&\underset{\mathclap{\eqref{eq:cv_decomp}}}{\leq} q_{K}^{E}(f)+\varepsilon .
\end{align*}
Thus condition (i) of \prettyref{rem:mixed=submixed} (b) is fulfilled, yielding 
$\gamma(\|\cdot\|^{E},\tau_{\operatorname{co}}^{E})=\gamma_{\operatorname{s}}(\|\cdot\|^{E},\tau_{\operatorname{co}}^{E})$.

(b) Since $\Omega$ is discrete, every subset of $\Omega$ is open, and a subset of $\Omega$ is compact if and only if it is finite. 
Thus $\tau_{\operatorname{co}}^{E}=\tau_{\mathcal{K}_{\Omega}}^{E}=\tau_{\mathcal{N}_{\Omega}}^{E}$ and statement (b) follows 
from part (a), \prettyref{thm:standard_saks_space} (d) and Mackey's theorem. 

(c) Let $\mathcal{C}_{b}(\Omega,E)\coloneqq \mathcal{C}\widetilde{v}(\Omega,E)$ for $\widetilde{v}(x)\coloneqq 1$, $x\in\Omega$, 
and set $\|\cdot\|_{\infty}^{E}\coloneqq\|\cdot\|_{\mathcal{C}\widetilde{v}(\Omega,E)}$. 
By part (f) the multiplication operator 
\[
M_{v}^{E}\colon \mathcal{C}v(\Omega,E)\to \mathcal{C}_{b}(\Omega,E),\;M_{v}^{E}(f)\coloneqq fv,
\]
is a topological isomorphism w.r.t.~$\gamma(\|\cdot\|^{E},\tau_{\operatorname{co}}^{E})$ and 
$\gamma(\|\cdot\|_{\infty}^{E},\tau_{\operatorname{co}}^{E})$. 
The space $(\mathcal{C}_{b}(\Omega,E),\gamma(\|\cdot\|_{\infty}^{E},\tau_{\operatorname{co}}^{E}))$ is complete by 
\cite[II.4.2 Proposition 2), p.~113]{cooper1978} because $\Omega$ is a $k_{\R}$-space (see \cite[p.~112]{cooper1978} and note that 
$k_{\R}$-spaces are exactly the $\mathcal{K}$-complete spaces by \cite[p.~80]{cooper1978}) and 
$(E,\|\cdot\|_{E},\tau_{\|\cdot\|_{E}})$ a complete Saks space as $\gamma(\|\cdot\|_{E},\tau_{\|\cdot\|_{E}})=\tau_{\|\cdot\|_{E}}$. 
This implies that $(\mathcal{C}v(\Omega,E),\gamma(\|\cdot\|^{E},\tau_{\operatorname{co}}^{E}))$ is also complete since $M_{v}^{E}$ is a 
topological isomorphism. 

(e) Let $\Omega$ be a hemicompact $k_{\R}$-space. Then $\Omega$ is a $k$-space by \cite[Lemma 5.1, p.~884]{mosiman1972} and 
$(\mathcal{C}_{b}(\Omega,E),\gamma(\|\cdot\|_{\infty}^{E},\tau_{\operatorname{co}}^{E}))$ is a Mackey space, 
which is strong if $E$ is a Banach space, by part (f) and \cite[Theorem 3.4, p.~165]{khurana1978c}. 
Let $\Omega$ be a completely metrisable space. 
Then $(\mathcal{C}_{b}(\Omega,E),\gamma(\|\cdot\|_{\infty}^{E},\tau_{\operatorname{co}}^{E}))$ is a Mackey space, 
which is strong if $E$ is a Banach space, by part (f), \cite[Theorem 2, p.~35]{khurana1978b} and \cite[Theorem 3.7, p.~202]{khurana1978a}. Using the topological isomorphism $M_{v}^{E}$ from part (c), we note that both statements remain valid if we 
replace $\mathcal{C}_{b}(\Omega,E)$ by $\mathcal{C}v(\Omega,E)$ and $\|\cdot\|_{\infty}^{E}$ by $\|\cdot\|^{E}$. 
\end{proof}

We remark that $(\mathcal{C}v(\Omega),\|\cdot\|,\tau_{\operatorname{co}})$ is semi-Montel by \cite[3.9 Example (i), p.~10]{kruse2023b} 
if $\Omega$ is discrete. 
In the case $\mathcal{C}_{b}(\Omega,E)$ the statement from \prettyref{cor:cv_mixed} (a)  
that $(\mathcal{C}_{b}(\Omega,E),\|\cdot\|_{\infty}^{E},\tau_{\operatorname{co}}^{E})$ is a Saks space and \prettyref{cor:cv_mixed} (f) 
for $\mathcal{W}_{0}=\mathcal{W}_{\operatorname{usc},0}^{+}(\Omega)$ (see \cite[p.~81--82]{cooper1978}) are contained in 
\cite[II.4.1 Definition, p.~113]{cooper1978} and \cite[II.4.2 Proposition 2), 6), p.~113]{cooper1978} 
(see also \cite[1.1 Remark, p.~844]{fontenot1974}).\footnote{The condition of upper semicontinuity or boundedness for the weights 
$w$ is missing in \cite{fontenot1974} (and \cite{khurana1978a,khurana1978b,khurana1978c}) even though it is contained 
in its reference \cite[Theorem 2.4, p.~316]{sentilles1972} for the proof.} 
In the case $\mathcal{C}_{b}(\Omega)$ \prettyref{cor:cv_mixed} (f) is contained in 
\cite[Proposition 3, p.~590]{cooper1971} for locally compact $\Omega$ and $\mathcal{W}_{0}=\mathcal{C}_{0}^{+}(\Omega)$. 
In the case $\mathcal{C}v(\Omega)$ the statement from \prettyref{cor:cv_mixed} (a) 
that $\gamma(\|\cdot\|,\tau_{\operatorname{co}})=\gamma_{\operatorname{s}}(\|\cdot\|,\tau_{\operatorname{co}})$, 
the inverse of the topological isomorphism $M_{v}^{\K}$ from the proof of part (c) and \prettyref{cor:cv_mixed} (c) 
are contained in \cite[Lemmas A.1, A.4, p.~44]{goldys2022} and \cite[Theorem A.5, p.~44]{goldys2022}. 
Moreover, we note that \prettyref{cor:cv_mixed} (b) does not hold for general $\Omega$ by \cite[Beispiel, p.~232]{kaballo2014}. 
Furthermore, we remark that $(\mathcal{C}v(\Omega,E),\gamma(\|\cdot\|^{E},\tau_{\operatorname{co}}^{E}))$ 
is C-sequential by \cite[Remark 3.19 (a), p.~14]{kruse_schwenninger2022} combined with the topological isomorphism $M_{v}^{E}$ 
if $E=\K$ and $\Omega$ a \emph{Polish space}, i.e.~separably completely metrisable. It is an open question whether this remains 
valid if $E$ is a Banach space. Due to \prettyref{cor:cv_mixed} (e) and \cite[Corollary 7.6, p.~52]{wilansky1981} 
it would be sufficient to prove that $(\mathcal{C}v(\Omega,E),\gamma(\|\cdot\|^{E},\tau_{\operatorname{co}}^{E}))$ is a 
\emph{Mazur space} if $\Omega$ is Polish and $E$ a Banach space, i.e.~that every sequentially $\gamma(\|\cdot\|^{E},\tau_{\operatorname{co}}^{E})$-continuous linear functional 
on $\mathcal{C}v(\Omega,E)$ is already $\gamma(\|\cdot\|^{E},\tau_{\operatorname{co}}^{E})$-continuous.
 
Next, we consider subspaces of $\mathcal{C}v(\Omega,E)$. 
Let $(E,\|\cdot\|_{E})$ be a normed space over $\C$. 
For a non-empty open subset $\Omega$ of a complex locally convex Hausdorff space 
let $\mathcal{H}(\Omega,E)$ be the space of holomorphic functions $f\colon\Omega\to E$, i.e.~the space of G\^{a}teaux-holomorphic 
and continuous functions $f\colon\Omega\to E$ (see \cite[Definition 3.6, p.~152]{dineen1999}),
and for a continuous function $v\colon\Omega\to (0,\infty)$ we set 
\[
\mathcal{H}v(\Omega,E)\coloneqq \{f\in \mathcal{H}(\Omega,E)\;|\;\|f\|^{E}\coloneqq\sup_{z\in\Omega}\|f(z)\|_{E}v(z)<\infty\}.
\]
Further, we define $\mathcal{H}v(\Omega)\coloneqq\mathcal{H}v(\Omega,\C)$ and $\|\cdot\|\coloneqq\|\cdot\|^{\C}$. 
Setting $\Lambda\coloneqq\Omega$, $\mathcal{G}(\Omega,E)\coloneqq\mathcal{H}(\Omega,E)$, $q^{E}\coloneqq 0$ and 
$T^{E}(f)\coloneqq f$ for $f\in\mathcal{H}(\Omega,E)$, we observe that $\FVE=\mathcal{H}v(\Omega,E)$, 
$\tau_{\mathcal{K}_{\Omega}}^{E}=\tau_{\operatorname{co}}^{E}$ on $\mathcal{H}v(\Omega,E)$ and 
conditions (i) and (ii) of \prettyref{prop:relation_weak_strong} are fulfilled. 

\begin{cor}\label{cor:subspace_cont_holo_mixed}
Let $\Omega$ be a non-empty open subset of a locally convex Hausdorff space $X$ over $\C$, $v\colon\Omega\to (0,\infty)$ continuous 
and $(E,\|\cdot\|_{E})$ a normed space over $\C$. Then the following assertions hold. 
\begin{enumerate}
\item $(\mathcal{H}v(\Omega,E),\|\cdot\|^{E},\tau_{\operatorname{co}}^{E})$ is a Saks space. 
\item If $X$ is $k_{\R}$-space and $E$ a Banach space, then $(\mathcal{H}v(\Omega,E),\|\cdot\|^{E},\tau_{\operatorname{co}}^{E})$ 
is complete.
\item If $\Omega$ is hemicompact, then $(\mathcal{H}v(\Omega,E),\|\cdot\|^{E},\tau_{\operatorname{co}}^{E})$ is C-sequential. 
\item If $X$ is a $k$-space and $E=\C$, then $(\mathcal{H}v(\Omega),\|\cdot\|,\tau_{\operatorname{co}})$ is semi-Montel 
and $\gamma(\|\cdot\|,\tau_{\operatorname{co}})=\gamma_{\operatorname{s}}(\|\cdot\|,\tau_{\mathcal{S}})$ for 
$\mathcal{S}\in\{\mathcal{N}_{\Omega},\mathcal{K}_{\Omega}\}$.
\item If $X$ is metrisable or a DFM-space, and $E$ a Banach space, then $\mathcal{H}v(\Omega,E)=\mathcal{H}v(\Omega,E)_{\sigma}$. 
\end{enumerate} 
\end{cor}
\begin{proof}
(a) and (c) follow from \prettyref{thm:standard_saks_space} and our observations above. 

(b) We set $\gamma_{\mathcal{C}v(\Omega,E)}\coloneqq\gamma(\|\cdot\|_{\mathcal{C}v(\Omega,E)},{\tau_{\operatorname{co}}^{E}}_{\mid \mathcal{C}v(\Omega,E)})$ and note that ${\gamma_{\mathcal{C}v(\Omega,E)}}_{\mid \mathcal{H}v(\Omega,E)}\leq \gamma(\|\cdot\|_{\mathcal{H}v(\Omega,E)},{\tau_{\operatorname{co}}^{E}}_{\mid \mathcal{H}v(\Omega,E)})$ 
by \cite[p.~39]{cooper1978}. The space $(\mathcal{C}v(\Omega,E),\gamma_{\mathcal{C}v(\Omega,E)})$ is complete 
by \prettyref{cor:cv_mixed} (c) and $\mathcal{H}v(\Omega,E)$ a closed subspace. This implies statement (b). 

(d) By \cite[3.9 Example (iii), p.~10--11]{kruse2023b} $(\mathcal{H}v(\Omega),\|\cdot\|,\tau_{\operatorname{co}})$ is semi-Montel. 
Furthermore, the observations $\tau_{\mathcal{K}_{\Omega}}=\tau_{\operatorname{co}}$ and 
$\tau_{\mathcal{N}_{\Omega}}\leq \tau_{\operatorname{co}}$ on $\mathcal{H}v(\Omega)$ imply
that $\gamma(\|\cdot\|,\tau_{\operatorname{co}})=\gamma_{\operatorname{s}}(\|\cdot\|,\tau_{\mathcal{S}})$ for 
$\mathcal{S}\in\{\mathcal{N}_{\Omega},\mathcal{K}_{\Omega}\}$ by \prettyref{rem:pre_Saks_unit_ball_char} (c).

(e) This follows from \cite[Example 3.8 (g), p.~159]{dineen1999}.
\end{proof}

Regarding \prettyref{cor:subspace_cont_holo_mixed} (c), we remark that $\Omega$ is hemicompact by 
\cite[Example 2.47, p.~79--81]{dineen1981} if $X$ is a DFM-space. 
\prettyref{thm:standard_saks_space} (e) and \prettyref{cor:subspace_cont_holo_mixed} (d) imply  
\cite[Proposition 3.1, p.~77]{bierstedt_summers1993} where $X=\C^{d}$. 
\prettyref{thm:standard_saks_space} (d) and \prettyref{cor:subspace_cont_holo_mixed} (d) 
imply \cite[4.5 Theorem, p.~875]{mujica1991} where $X$ is a Banach space and $v(z)\coloneqq 1$ for all $z\in\Omega$. 
Further, we remark that $(H^{\infty}(\D),\gamma(\|\cdot\|_{\infty},\tau_{\operatorname{co}}))$ is not a Mackey space by 
\cite[V.2.7 Corollary, p.~235]{cooper1978}.

\begin{rem}
Let $(E,\|\cdot\|)$ be a Banach space over $\C$ and $1\leq p<\infty$. We may also define a strong $E$-valued version of the Hardy 
$H^{p}$ from \prettyref{cor:hardy_bergman_dirichlet}. Let 
\[
H^{p}(E)\coloneqq\Bigl\{f\in\mathcal{H}(\D,E)\;|\;
(\|f\|_{p}^{E})^{p}\coloneqq \sup_{0<r<1}\frac{1}{2\pi}\int_{0}^{2\pi}\|f(r\euler^{\mathrm{i}\theta})\|_{E}^{p}\d\theta<\infty\Bigr\}.
\]
However, in contrast to the case $p=\infty$, we only have the strict inclusion 
$H^{p}(E)\subsetneq H^{p}(E)_{\sigma}$ for $1\leq p<\infty$ by \cite[Corollary 12, p.~359]{freniche2001} 
if $E$ is infinite-dimensional.
\end{rem}

Let us turn to another subspace of $\mathcal{C}v(\Omega,E)$. For a non-empty open set $\Omega\subset\R^{d}$ 
and a normed space $(E,\|\cdot\|_{E})$ over $\K$ we denote by $\mathcal{C}^{\infty}(\Omega,E)$ the space of 
infinitely continuously partially differentiable $E$-valued functions on $\Omega$ and by $(\partial^{\beta})^{E}f$ the $\beta$-th 
partial derivative of $f\in\mathcal{C}^{\infty}(\Omega,E)$ for a multi-index $\beta\in\N_{0}^{d}$. If $E=\K$, 
we set $\mathcal{C}^{\infty}(\Omega)\coloneqq \mathcal{C}^{\infty}(\Omega,\K)$ and 
$\partial^{\beta}f\coloneqq(\partial^{\beta})^{\K}f$ for $f\in\mathcal{C}^{\infty}(\Omega)$ and $\beta\in\N_{0}^{d}$. 
For $\K=\C$ and a polynomial $P$ on $\R^{d}$ with constant complex coefficients we define the linear partial differential operator 
$P(\partial)^{E}\coloneqq P((\partial)^{E})$ on $\mathcal{C}^{\infty}(\Omega,E)$ in the usual way and its kernel
\[
\mathcal{C}_{P}(\Omega,E)\coloneqq\{f\in\mathcal{C}^{\infty}(\Omega,E)\;|\;f\in\ker P(\partial)^{E}\}.
\]
For a continuous function $v\colon\Omega\to(0,\infty)$ we define the weighted kernel
\[
  \mathcal{C}_{P}v(\Omega,E)
\coloneqq \{f\in\mathcal{C}_{P}(\Omega,E)\;|\;\|f\|^{E}\coloneqq\sup_{x\in\Omega}\|f(x)\|_{E}v(x)<\infty\}.
\]
Further, we define $\mathcal{C}_{P}v(\Omega)\coloneqq\mathcal{C}_{P}v(\Omega,\C)$ and $\|\cdot\|\coloneqq\|\cdot\|^{\C}$. 
Setting $\Lambda\coloneqq\Omega$, $\mathcal{G}(\Omega,E)\coloneqq\mathcal{C}_{P}(\Omega,E)$, $q^{E}\coloneqq 0$ and 
$T^{E}(f)\coloneqq f$ for $f\in\mathcal{C}_{P}(\Omega,E)$, we observe that $\FVE=\mathcal{C}_{P}v(\Omega,E)$, 
$\tau_{\mathcal{K}_{\Omega}}^{E}=\tau_{\operatorname{co}}^{E}$ on $\mathcal{C}_{P}v(\Omega,E)$ and 
conditions (i) and (ii) of \prettyref{prop:relation_weak_strong} are fulfilled. 

\begin{cor}\label{cor:subspace_cont_hypo_mixed}
Let $\Omega\subset\R^{d}$ be non-empty and open, $v\colon\Omega\to (0,\infty)$ continuous, $(E,\|\cdot\|_{E})$ a normed space over $\C$ 
and $P(\partial)^{E}$ a linear partial differential operator such that $P(\partial)^{\C}$ is hypoelliptic. 
Then the following assertions hold. 
\begin{enumerate}
\item $(\mathcal{C}_{P}v(\Omega,E),\|\cdot\|^{E},\tau_{\operatorname{co}}^{E})$ is a C-sequential Saks space.
\item If $E=\C$, then $(\mathcal{C}_{P}v(\Omega),\|\cdot\|,\tau_{\operatorname{co}})$ is semi-Montel and 
$\gamma(\|\cdot\|,\tau_{\operatorname{co}})=\gamma_{\operatorname{s}}(\|\cdot\|,\tau_{\mathcal{S}})$ for 
$\mathcal{S}\in\{\mathcal{N}_{\Omega},\mathcal{K}_{\Omega}\}$.
\item If $E$ is a Banach space, then $\mathcal{C}_{P}v(\Omega,E)=\mathcal{C}_{P}v(\Omega,E)_{\sigma}$ and the Saks space 
$(\mathcal{C}_{P}v(\Omega,E),\|\cdot\|^{E},\tau_{\operatorname{co}}^{E})$ is complete.
\end{enumerate}
\end{cor}
\begin{proof} 
(a) This follows from \prettyref{thm:standard_saks_space}, our observations above and the fact that open subsets of $\R^{d}$ 
are hemicompact. 

(b) By \cite[3.9 Example (ii), p.~10]{kruse2023b} $(\mathcal{C}_{P}v(\Omega),\|\cdot\|,\tau_{\operatorname{co}})$ is semi-Montel. 
The rest of the proof is analogous to the proof of \prettyref{cor:subspace_cont_holo_mixed} (d).

(c) It holds $\mathcal{C}_{P}v(\Omega,E)=\mathcal{C}_{P}v(\Omega,E)_{\sigma}$ 
by the weak-strong principle \cite[Theorem 9, p.~232]{bonet2007} and Mackey's theorem. 
Due to part (b), $\tau_{\operatorname{p}}\leq\tau_{\mathcal{K}_{\Omega}}$, 
$\tau_{\mathcal{K}_{\Omega}}^{E}=\tau_{\operatorname{co}}^{E}$ and \prettyref{prop:relation_weak_strong} (c) 
we get that the Saks space $(\mathcal{C}_{P}v(\Omega,E),\|\cdot\|^{E},\tau_{\operatorname{co}}^{E})$ is complete.
\end{proof}

\begin{rem}\label{rem:saks_space_weighted_cont}
Let $(X,\|\cdot\|,\tau)$ be a Saks space, $Y$ a linear subspace of $X$. 
Then the subtriple $(Y,\|\cdot\|_{\mid Y},\tau_{\mid Y})$ is also a Saks space and 
$\gamma(\|\cdot\|,\tau)_{\mid Y}\leq \gamma(\|\cdot\|_{\mid Y},\tau_{\mid Y})$ by \cite[p.~39]{cooper1978}. 
In general, $\gamma(\|\cdot\|,\tau)_{\mid Y}=\gamma(\|\cdot\|_{\mid Y},\tau_{\mid Y})$ does not hold 
by \cite[p.~133]{alexiewicz1958}. However, suppose that $(Y,\|\cdot\|_{\mid Y},\tau_{\mid Y})$ fulfils condition (i) or 
(ii) of \prettyref{rem:mixed=submixed} (b) so that 
$\gamma(\|\cdot\|_{\mid Y},\tau_{\mid Y})=\gamma_{\operatorname{s}}(\|\cdot\|_{\mid Y},\tau_{\mid Y})$. 
Then it follows from \cite[I.4.6 Lemma, p.~44]{cooper1978} 
that $\gamma(\|\cdot\|,\tau)_{\mid Y}=\gamma(\|\cdot\|_{\mid Y},\tau_{\mid Y})$. 
This observation is an alternative way to derive $\gamma(\|\cdot\|,\tau_{\operatorname{co}})=\gamma_{\operatorname{s}}(\|\cdot\|,\tau_{\mathcal{K}_{\Omega}})$ in \prettyref{cor:subspace_cont_holo_mixed} (d) and \prettyref{cor:subspace_cont_hypo_mixed} (b) 
since condition (ii) of \prettyref{rem:mixed=submixed} (b) is fulfilled for the subtriple.\footnote{Since \cite[I.4.6 Lemma, p.~44]{cooper1978} needs that condition (i) or (ii) of \prettyref{rem:mixed=submixed} (b) is fulfilled for 
the subtriple $(Y,\|\cdot\|_{\mid Y},\tau_{\mid Y})$ and not for the triple $(X,\|\cdot\|,\tau)$, the proof of \cite[5.~Proposition, p.~291]{prieto1992} seems to be doubtful where $X=\mathcal{C}_{b}(U)$, $U$ an open connected subset of 
a complex Banach space, $\|\cdot\|=\|\cdot\|_{\infty}$, $\tau=\tau_{b}$ is the topology of uniform convergence on $U$-bounded subsets of $U$ and $Y=H^{\infty}(U)$. With the definition of the strict topology $\beta$ on 
$\mathcal{C}_{b}(U)$ in \cite[p.290]{prieto1992} it is shown that $\beta=\gamma(\|\cdot\|_{\infty},\tau_{b})$ 
in \cite[3.~Theorem, p.~291]{prieto1992}. Then \cite[5.~Proposition, p.~291]{prieto1992} says that 
$\beta_{\mid H^{\infty}(U)}=\gamma(\|\cdot\|_{\mid H^{\infty}(U)},{\tau_{b}}_{\mid H^{\infty}(U)})$. 
However, in its proof it is only shown that the triple $(\mathcal{C}_{b}(U),\|\cdot\|_{\infty},\tau_{b})$ 
fulfils condition (i) of \prettyref{rem:mixed=submixed} (b), not the subtriple 
$(H^{\infty}(U),\|\cdot\|_{\mid H^{\infty}(U)},{\tau_{b}}_{\mid H^{\infty}(U)})$. 
At least if $U$ is a subset of a finite-dimensional complex Banach space, then one can use $\tau_{b}=\tau_{\operatorname{co}}$ which gives that condition (ii) of \prettyref{rem:mixed=submixed} (b) is fulfilled for the subtriple in this case.}
\end{rem}
 
For a normed space $(E,\|\cdot\|_{E})$ over $\C$ and a continuous function $v\colon\D\to(0,\infty)$ with 
$\D=\{z\in\C\;|\;|z|<1\}$ we define the Bloch type space 
\[
\mathcal{B}v(\D,E)\coloneqq\{f\in\mathcal{H}(\D,E)\;|\;\|f\|^{E}\coloneqq \|f(0)\|_{E}+\sup_{z\in\D}\|\partial_{\C}^{E}f(z)\|_{E}v(z)<\infty\}
\]
where 
\[
\partial_{\C}^{E}f(z)\coloneqq\lim_{\substack{h\to 0\\h\in\C,h\neq 0}}\frac{f(z+h)-f(z)}{h},\quad z\in\D,
\]
Further, we define $\mathcal{B}v(\D)\coloneqq\mathcal{B}v(\D,\C)$ and $\|\cdot\|\coloneqq\|\cdot\|^{\C}$. 
Setting $\Lambda\coloneqq\D$, $\mathcal{G}(\D,E)\coloneqq\mathcal{H}(\D,E)$, $q^{E}(f)\coloneqq \|f(0)\|_{E}$ and 
$T^{E}(f)\coloneqq \partial_{\C}^{E}f(z)$ for $f\in\mathcal{H}(\D,E)$, we observe that $\FVE=\mathcal{B}v(\D,E)$ 
and conditions (i) and (ii) of \prettyref{prop:relation_weak_strong} are fulfilled. 

\begin{cor}\label{cor:bloch_mixed}
Let $v\colon\D\to(0,\infty)$ be continuous and $(E,\|\cdot\|_{E})$ a normed space over $\C$. Then the following assertions hold. 
\begin{enumerate}
\item $(\mathcal{B}v(\D,E),\|\cdot\|^{E},\tau_{\operatorname{co}}^{E})$ is a C-sequential Saks space and 
$\tau_{\operatorname{co}}^{E}=\tau_{\mathcal{K}_{\D}}^{E}$ on $\mathcal{B}v(\D,E)$.
\item If $E=\C$, then $(\mathcal{B}v(\D),\|\cdot\|,\tau_{\operatorname{co}})$ is semi-Montel and 
$\gamma(\|\cdot\|,\tau_{\operatorname{co}})=\gamma_{\operatorname{s}}(\|\cdot\|,\tau_{\mathcal{S}})$ for 
$\mathcal{S}\in\{\mathcal{N}_{\D},\mathcal{K}_{\D}\}$.
\item If $E$ is a Banach space, then $\mathcal{B}v(\D,E)=\mathcal{B}v(\D,E)_{\sigma}$ and the Saks space 
$(\mathcal{B}v(\D,E),\|\cdot\|^{E},\tau_{\operatorname{co}}^{E})$ is complete.
\end{enumerate}
\end{cor}
\begin{proof}
(a) For every $0<r<1$ we have 
\begin{align*}
     \max_{|z|\leq r}\|f(z)\|_{E}
&\leq \|f(0)\|_{E} +\max_{|z|\leq r}\|\int_{0}^{z}\partial_{\C}^{E}f(\zeta)\d\zeta\|_{E}\\    
&\leq \Bigl(1+\frac{r}{\min_{|\zeta|\leq r}v(\zeta)}\Bigr)
     \bigr(\|f(0)\|_{E}+\sup_{|\zeta|\leq r}\|\partial_{\C}^{E}(\zeta)\|_{E}v(\zeta)\bigl)
\end{align*}
for all $f\in \mathcal{B}v(\D,E)$ where the integral in the estimate above is a Bochner integral 
(cf.~\cite[Corollary 3.8, p.~9--10]{kruse2019_3} for the case $E=\C$), 
and for every $0<s<r<1$ 
\begin{align*}
      \|f(0)\|_{E}+\max_{|z|\leq s}\|\partial_{\C}^{E}f(z)\|_{E}v(z)
&\leq \|f(0)\|_{E}+\frac{1}{r}\max_{|z|\leq s}v(z)\max_{|\zeta|\leq r}\|f(\zeta)\|_{E}\\
&\leq \Bigl(1+\frac{1}{r}\max_{|z|\leq s}v(z)\Bigr)\max_{|\zeta|\leq r}\|f(\zeta)\|_{E}
\end{align*}
for all $f\in \mathcal{B}v(\D,E)$ by Cauchy's inequality, which proves 
$\tau_{\operatorname{co}}^{E}=\tau_{\mathcal{K}_{\D}}^{E}$ on $\mathcal{B}v(\D,E)$. 
The rest of part (a) follows from \prettyref{thm:standard_saks_space}, our observations above and the fact that $\D$ 
is hemicompact. 

(b) By \cite[3.9 Example (iv), p.~11]{kruse2023b} $(\mathcal{B}v(\D),\|\cdot\|,\tau_{\operatorname{co}})$ is semi-Montel. 
The rest of the proof is analogous to the proof of \prettyref{cor:subspace_cont_holo_mixed} (d).

(c) It holds $\mathcal{B}v(\D,E)=\mathcal{B}v(\D,E)_{\sigma}$ 
by the weak-strong principle \cite[Theorem 9, p.~232]{bonet2007} and Mackey's theorem. 
Due to part (b), $\tau_{\operatorname{p}}\leq\tau_{\mathcal{K}_{\D}}$, 
$\tau_{\mathcal{K}_{\D}}^{E}=\tau_{\operatorname{co}}^{E}$ and \prettyref{prop:relation_weak_strong} (c) 
we get that the Saks space $(\mathcal{B}v(\D,E),\|\cdot\|^{E},\tau_{\operatorname{co}}^{E})$ is complete.
\end{proof}

For a normed space $(E,\|\cdot\|_{E})$ over $\K$ and a metric space $(\Omega,\d)$ with a base point denoted by $\mathbf{0}$, 
i.e.~a \emph{pointed metric space} in the sense of \cite[p.~1]{weaver2018}, 
we define the space of $E$-valued Lipschitz continuous on $(\Omega,\d)$ that vanish at $\mathbf{0}$ by 
\[
\mathrm{Lip}_{0}(\Omega,E)
\coloneqq\Bigl\{f\colon\Omega\to E\;|\;f(\mathbf{0})=0\text{ and }\|f\|^{E}\coloneqq\sup_{\substack{x,y\in\Omega\\x\neq y}}
\frac{\|f(x)-f(y)\|_{E}}{\d(x,y)}<\infty\Bigr\}.
\]
Further, we define $\mathrm{Lip}_{0}(\Omega)\coloneqq\mathrm{Lip}_{0}(\Omega,\K)$ and $\|\cdot\|\coloneqq\|\cdot\|^{\K}$. 
Setting $\Lambda\coloneqq\Omega_{\operatorname{wd}}\coloneqq\{(x,y)\in\Omega^2\;|\;x\neq y\}$, $v\colon\Lambda\to (0,\infty)$, $v(x,y)\coloneqq \frac{1}{\d(x,y)}$, 
$\GE\coloneqq\{f\colon\Omega\to E\;|\;f(\mathbf{0})=0\}$, 
$q^{E}\coloneqq 0$ and $T^{E}(f)(x,y)\coloneqq f(x)-f(y)$ for $(x,y)\in\Lambda$ and $f\in\GE$, we observe that 
$\FVE=\mathrm{Lip}_{0}(\Omega,E)$ and conditions (i) and (ii) of \prettyref{prop:relation_weak_strong} are fulfilled. 

\begin{cor}\label{cor:lipschitz_vanish_mixed}
Let $(\Omega,\d)$ be a pointed metric space and $(E,\|\cdot\|_{E})$ a normed space over $\K$. 
Then the following assertions hold. 
\begin{enumerate}
\item $(\mathrm{Lip}_{0}(\Omega,E),\|\cdot\|^{E},\tau_{\operatorname{co}}^{E})$ is a Saks space, 
$\tau_{\operatorname{co}}^{E}=\tau_{\mathcal{K}_{\Omega_{\operatorname{wd}}}}^{E}$ on $\|\cdot\|^{E}$-bounded sets, 
$\gamma(\|\cdot\|^{E},\tau_{\operatorname{co}}^{E})=\gamma(\|\cdot\|^{E},\tau_{\mathcal{K}_{\Omega_{\operatorname{wd}}}}^{E})$ 
and $\mathrm{Lip}_{0}(\Omega,E)=\mathrm{Lip}_{0}(\Omega,E)_{\sigma}$.
\item If $E$ is a Banach space, then $(\mathrm{Lip}_{0}(\Omega,E),\|\cdot\|^{E},\tau_{\operatorname{co}}^{E})$ 
is complete.
\item If $\Omega$ is hemicompact, then $(\mathrm{Lip}_{0}(\Omega,E),\|\cdot\|^{E},\tau_{\operatorname{co}}^{E})$ 
is C-sequential.
\item If $E=\K$, then $(\mathrm{Lip}_{0}(\Omega),\|\cdot\|,\tau_{\operatorname{co}})$ is semi-Montel and 
$\gamma(\|\cdot\|,\tau_{\operatorname{co}})=\gamma_{\operatorname{s}}(\|\cdot\|,\tau_{\mathcal{S}})$ for 
$\mathcal{S}\in\{\mathcal{N}_{\Omega_{\operatorname{wd}}},\mathcal{K}_{\Omega_{\operatorname{wd}}}\}$.
\end{enumerate}
\end{cor}
\begin{proof}
(a) First, we note that for compact $K\subset\Omega_{\operatorname{wd}}$ the projections $\pi_{1}(K)$ and $\pi_{2}(K)$ on the first and 
second component, respectively, are compact in $\Omega$ and 
\[
q_{K}^{E}(f)=\sup_{(x,y)\in K}\frac{\|f(x)-f(y)\|_{E}}{\d(x,y)}
\leq 2\max_{(x,y)\in K}\frac{1}{{\d(x,y)}}\sup_{x\in (\pi_{1}(K)\cup \pi_{2}(K))}\|f(x)\|_{E}
\]
for all $f\in\mathrm{Lip}_{0}(\Omega,E)$, which implies 
$\tau_{\mathcal{K}_{\Omega_{\operatorname{wd}}}}^{E}\leq\tau_{\operatorname{co}}^{E}$ on $\mathrm{Lip}_{0}(\Omega,E)$. 
On the other hand, for every $\varepsilon >0$ and compact $K\subset\Omega$ we have with 
$\mathbb{B}_{\varepsilon}\coloneqq\{x\in\Omega\;|\;\d(x,\mathbf{0})<\varepsilon\}$ that 
\begin{align*}
\sup_{x\in K}\|f(x)\|_{E}
&=\sup_{x\in K,x\neq\mathbf{0}}\frac{\|f(x)-f(\mathbf{0})\|_{E}}{\d(x,\mathbf{0})}\d(x,\mathbf{0})\\
&\leq \max_{x\in K}\d(x,\mathbf{0})
\sup_{x\in K\setminus\mathbb{B}_{\varepsilon}}\frac{\|f(x)-f(\mathbf{0})\|_{E}}{\d(x,\mathbf{0})}
+\varepsilon \sup_{x\in\mathbb{B}_{\varepsilon},x\neq\mathbf{0}}\frac{\|f(x)-f(\mathbf{0})\|_{E}}{\d(x,\mathbf{0})}\\
&\leq \max_{x\in K}\d(x,\mathbf{0})q_{(K\setminus\mathbb{B}_{\varepsilon})\times\{\mathbf{0}\}}^{E}(f)+\varepsilon \|f\|^{E}
\end{align*}
for all $f\in\mathrm{Lip}_{0}(\Omega,E)$. Thus the topologies $\tau_{\mathcal{K}_{\Omega_{\operatorname{wd}}}}^{E}$ and 
$\tau_{\operatorname{co}}^{E}$ coincide on $\|\cdot\|^{E}$-bounded sets. 
Due to \cite[I.3.1 Lemma, p.~27]{cooper1978} and \prettyref{thm:standard_saks_space} (a) 
this yields that $(\mathrm{Lip}_{0}(\Omega,E),\|\cdot\|^{E},\tau_{\operatorname{co}}^{E})$ is a Saks space. In addition, 
we deduce that
$\gamma(\|\cdot\|^{E},\tau_{\operatorname{co}}^{E})=\gamma(\|\cdot\|^{E},\tau_{\mathcal{K}_{\Omega_{\operatorname{wd}}}}^{E})$ 
by the definition of the mixed topology. Moreover, it holds that $\mathrm{Lip}_{0}(\Omega,E)=\mathrm{Lip}_{0}(\Omega,E)_{\sigma}$ 
by Mackey's theorem. 

(b) This follows from parts (a) and (d), $\tau_{\operatorname{p}}\leq\tau_{\mathcal{K}_{\Omega_{\operatorname{wd}}}}$ 
and \prettyref{prop:relation_weak_strong} (c).

(c) If $\Omega$ is hemicompact, then $\tau_{\operatorname{co}}^{E}$ is metrisable 
and so $(\mathrm{Lip}_{0}(\Omega,E),\|\cdot\|^{E},\tau_{\operatorname{co}}^{E})$ C-sequential by 
\cite[Proposition 5.7, p.~2681--2682]{kruse_meichnser_seifert2018}.

(d) By \cite[3.9 Example (v), p.~11]{kruse2023b} $(\mathrm{Lip}_{0}(\Omega),\|\cdot\|,\tau_{\operatorname{co}})$ is semi-Montel. 
Furthermore, the observations 
$\tau_{\mathcal{N}_{\Omega_{\operatorname{wd}}}}\leq\tau_{\mathcal{K}_{\Omega_{\operatorname{wd}}}}\leq\tau_{\operatorname{co}}$ on $\mathrm{Lip}_{0}(\Omega)$ imply
that $\gamma(\|\cdot\|,\tau_{\operatorname{co}})=\gamma_{\operatorname{s}}(\|\cdot\|,\tau_{\mathcal{S}})$ for 
$\mathcal{S}\in\{\mathcal{N}_{\Omega_{\operatorname{wd}}},\mathcal{K}_{\Omega_{\operatorname{wd}}}\}$ by \prettyref{rem:pre_Saks_unit_ball_char} (c).
\end{proof}

Regarding \prettyref{cor:lipschitz_vanish_mixed} (c), we remark that a metric space is hemicompact if and only if it 
is separable and locally compact by \cite[Exercises 3.4.E (a), (c), p.~165]{engelking1989}, 
\cite[Exercises 3.8.C (b), p.~194--195]{engelking1989} and \cite[16.11 Theorem, p.~112]{willard1970}. 
The statement that $(\mathrm{Lip}_{0}(\Omega),\|\cdot\|,\tau_{\operatorname{co}})$ 
is a complete semi-Montel Saks space from \prettyref{cor:lipschitz_vanish_mixed} (b) and (d) for $E=\K$ 
is already contained in \cite[Theorem 2.1 (7), p.~642]{vargas2018}. 
\prettyref{cor:lipschitz_vanish_mixed} (c) and \prettyref{thm:standard_saks_space} (d) imply 
\cite[Theorem 3.4, p.~647]{vargas2018}. 
Further, \prettyref{cor:lipschitz_vanish_mixed} (c) and \prettyref{thm:standard_saks_space} (e) imply 
\cite[Theorem 3.3, p.~645]{vargas2018} where $\Omega$ is compact and $\mathcal{W}_{0}=\mathcal{C}_{0}^{+}(\Omega_{\operatorname{wd}})$. 

For a normed space $(E,\|\cdot\|_{E})$ over $\K$ and $k\in\N_{0}$ we denote by $\mathcal{C}^{k}(\Omega,E)$ the space of 
$k$-times continuously partially differentiable $E$-valued functions on a non-empty open bounded set $\Omega\subset\R^{d}$. 
We define the space of $k$-times continuously partially differentiable $E$-valued functions on $\Omega$ whose partial derivatives 
up to order $k$ are continuously extendable to the boundary of $\Omega$ by 
\[
 \mathcal{C}^{k}(\overline{\Omega},E)\coloneqq\{f\in\mathcal{C}^{k}(\Omega,E)\;|\;(\partial^{\beta})^{E}f\;
 \text{cont.\ extendable on}\;\overline{\Omega}\;\text{for all}\;\beta\in\N^{d}_{0},\,|\beta|\leq k\}
\]
which we equip with the norm given by 
\[
 |f|_{\mathcal{C}^{k}(\overline{\Omega})}^{E}
\coloneqq \sup_{\substack{x\in \Omega\\ \beta\in\N^{d}_{0}, |\beta|\leq k}}\|(\partial^{\beta})^{E}f(x)\|_{E} 
=\sup_{\substack{x\in \overline{\Omega}\\ \beta\in\N^{d}_{0}, |\beta|\leq k}}\|(\partial^{\beta})^{E}f(x)\|_{E}, 
 \quad f\in\mathcal{C}^{k}(\overline{\Omega},E),
\]
where we use the same symbol for the unique continuous extension of $(\partial^{\beta})^{E}f$ to $\overline{\Omega}$. 
The space of functions in $\mathcal{C}^{k}(\overline{\Omega},E)$ such 
that all its $k$-th partial derivatives are $\alpha$-H\"older continuous with $0<\alpha\leq 1$ is given by  
\[
\mathcal{C}^{k,\alpha}(\overline{\Omega},E)\coloneqq 
\bigl\{f\in\mathcal{C}^{k}(\overline{\Omega},E)\;|\;
\|f\|^{E}<\infty\bigr\}
\]
where
\[
\|f\|^{E}\coloneqq |f|_{\mathcal{C}^{k}(\overline{\Omega})}^{E}
+\sup_{\beta\in\N^{d}_{0}, |\beta|=k}\sup_{\substack{x,y\in\Omega\\x\neq y}}
\frac{\|(\partial^{\beta})^{E}f(x)-(\partial^{\beta})^{E}f(y)\|_{E}}{|x-y|^{\alpha}}.
\]
Further, we define $\mathcal{C}^{k}(\overline{\Omega})\coloneqq \mathcal{C}^{k}(\overline{\Omega},\K)$, 
$\mathcal{C}^{k,\alpha}(\overline{\Omega})\coloneqq\mathcal{C}^{k,\alpha}(\overline{\Omega},\K)$ and 
$|\cdot|_{\mathcal{C}^{k}(\overline{\Omega})}\coloneqq |\cdot|_{\mathcal{C}^{k}(\overline{\Omega})}^{\K}$ as well as 
$\|\cdot\|\coloneqq\|\cdot\|^{\K}$. 

Let $E$ be Banach space. Then for every $\beta\in\N_{0}^{d}$ with $|\beta|=k$ and $f\in\mathcal{C}^{k,\alpha}(\overline{\Omega},E)$ 
the unique continuous extension of the partial derivative $(\partial^{\beta})^{E}f$ to 
$\overline{\Omega}$ is $\alpha$-H\"older continuous and the extension has the same H\"older constant 
by \cite[Proposition 1.6, p.~5]{weaver2018} and \cite[Proposition 2.50, p.~66]{weaver2018}, i.e.
\[
\sup_{\substack{x,y\in\Omega\\x\neq y}}\frac{\|(\partial^{\beta})^{E}f(x)-(\partial^{\beta})^{E}f(y)\|_{E}}{|x-y|^{\alpha}}
=\sup_{\substack{x,y\in\overline{\Omega}\\x\neq y}}
\frac{\|(\partial^{\beta})^{E}f(x)-(\partial^{\beta})^{E}f(y)\|_{E}}{|x-y|^{\alpha}}.
\]
Setting $\overline{\Omega}_{\operatorname{wd}}\coloneqq\{(x,y)\in\overline{\Omega}^2\;|\;x\neq y\}$,
$\Lambda\coloneqq\{\beta\in\N^{d}_{0}\;|\;|\beta|=k\}\times\overline{\Omega}_{\operatorname{wd}}$, 
$v\colon\Lambda\to (0,\infty)$, $v(\beta,x,y)\coloneqq \frac{1}{|x-y|^{\alpha}}$, 
$\GE\coloneqq\mathcal{C}^{k,\alpha}(\overline{\Omega},E)$, 
$q^{E}\coloneqq |\cdot|_{\mathcal{C}^{k}(\overline{\Omega})}^{E}$ and 
$T^{E}(f)(\beta,x,y)\coloneqq (\partial^{\beta})^{E}f(x)-(\partial^{\beta})^{E}f(y)$ for $(\beta,x,y)\in\Lambda$ and $f\in\GE$, 
we remark that 
$\FVE=\mathcal{C}^{k,\alpha}(\overline{\Omega},E)$ and conditions (i) and (ii) of \prettyref{prop:relation_weak_strong} are fulfilled. 
Furthermore, we denote by $\mathcal{N}_{k,\overline{\Omega}_{\operatorname{wd}}}$ the family of subsets of $\Lambda$ 
of the form $\{\beta\in\N^{d}_{0}\;|\;|\beta|=k\}\times N$ for $N\in\mathcal{N}_{\overline{\Omega}_{\operatorname{wd}}}$.
Similarly, we denote by $\mathcal{K}_{k,\overline{\Omega}_{\operatorname{wd}}}$ the family of subsets of $\Lambda$ of the form $\{\beta\in\N^{d}_{0}\;|\;|\beta|=k\}\times K$ for $K\in\mathcal{K}_{\overline{\Omega}_{\operatorname{wd}}}$. 
Since $\{\beta\in\N^{d}_{0}\;|\;|\beta|=k\}$ is a finite set, we have 
$\tau_{\mathcal{N}_{\Lambda}}^{E}=\tau_{\mathcal{N}_{k,\overline{\Omega}_{\operatorname{wd}}}}^{E}$ and 
$\tau_{\mathcal{K}_{\Lambda}}^{E}=\tau_{\mathcal{K}_{k,\overline{\Omega}_{\operatorname{wd}}}}^{E}$. 

\begin{cor}\label{cor:hoelder_diff__mixed}
Let $\Omega\subset\R^{d}$ be a non-empty open bounded set, $k\in\N_{0}$, $0<\alpha\leq 1$ and $(E,\|\cdot\|_{E})$ a Banach space 
over $\K$. Then the following assertions hold. 
\begin{enumerate}
\item $(\mathcal{C}^{k,\alpha}(\overline{\Omega},E),\|\cdot\|^{E},\tau_{|\cdot|_{\mathcal{C}^{k}(\overline{\Omega})}^{E}})$ 
is a C-sequential Saks space, $\tau_{|\cdot|_{\mathcal{C}^{k}(\overline{\Omega})}^{E}}=\tau_{\mathcal{K}_{k,\overline{\Omega}_{\operatorname{wd}}}}^{E}$ 
on $\|\cdot\|^{E}$-bounded sets and 
$\gamma(\|\cdot\|^{E},\tau_{|\cdot|_{\mathcal{C}^{k}(\overline{\Omega})}^{E}})=\gamma(\|\cdot\|^{E},\tau_{\mathcal{K}_{k,\overline{\Omega}_{\operatorname{wd}}}}^{E})$.
\end{enumerate}
Suppose for (b)--(c) that $\Omega$ has Lipschitz boundary if $k\geq 1$.
\begin{enumerate}
\item[(b)] If $E=\K$, then $(\mathcal{C}^{k,\alpha}(\overline{\Omega}),\|\cdot\|,\tau_{|\cdot|_{\mathcal{C}^{k}(\overline{\Omega})}})$ 
is semi-Montel and 
$\gamma(\|\cdot\|,\tau_{|\cdot|_{\mathcal{C}^{k}(\overline{\Omega})}})=\gamma_{\operatorname{s}}(\|\cdot\|,\tau_{\mathcal{S}})$ for 
$\mathcal{S}\in\{\mathcal{N}_{k,\overline{\Omega}_{\operatorname{wd}}},\mathcal{K}_{k,\overline{\Omega}_{\operatorname{wd}}}\}$.
\item[(c)] $\mathcal{C}^{k,\alpha}(\overline{\Omega},E)=\mathcal{C}^{k,\alpha}(\overline{\Omega},E)_{\sigma}$ and 
$(\mathcal{C}^{k,\alpha}(\overline{\Omega},E),\|\cdot\|^{E},\tau_{|\cdot|_{\mathcal{C}^{k}(\overline{\Omega})}^{E}})$ is complete. 
\end{enumerate}
\end{cor}
\begin{proof}
(a) We set $M\coloneqq \{\beta\in\N^{d}_{0}\;|\;|\beta|=k\}$ and note that 
for compact $K\subset\overline{\Omega}_{\operatorname{wd}}$ the projections $\pi_{1}(K)$ and $\pi_{2}(K)$ 
on the first and second component, respectively, are compact in $\overline{\Omega}$ and 
\begin{align*}
q_{M\times K}^{E}(f)
&=\sup_{\substack{(x,y)\in K \\ \beta\in\N^{d}_{0}, |\beta|= k}}
|f|_{\mathcal{C}^{k}(\overline{\Omega})}^{E}+\frac{\|(\partial^{\beta})^{E}f(x)-(\partial^{\beta})^{E}f(y)\|_{E}}{|x-y|^{\alpha}}\\
&\leq |f|_{\mathcal{C}^{k}(\overline{\Omega})}^{E}+2\max_{(x,y)\in K}\frac{1}{|x-y|^{\alpha}}
\sup_{\substack{x\in (\pi_{1}(K)\cup \pi_{2}(K))\\ \beta\in\N^{d}_{0}, |\beta|= k}}\|(\partial^{\beta})^{E}f(x)\|_{E}\\
&\leq \Bigl(1+2\max_{(x,y)\in K}\frac{1}{|x-y|^{\alpha}}\Bigr)|f|_{\mathcal{C}^{k}(\overline{\Omega})}^{E}
\end{align*}
for all $f\in \mathcal{C}^{k,\alpha}(\overline{\Omega})$, which implies 
$\tau_{\mathcal{K}_{k,\overline{\Omega}_{\operatorname{wd}}}}^{E}\leq\tau_{|\cdot|_{\mathcal{C}^{k}(\overline{\Omega})}^{E}}$. 
On the other hand, fix some $y_{0}\in\Omega$. For every $\varepsilon >0$ we have with 
$\mathbb{B}_{\varepsilon}\coloneqq\{x\in\overline{\Omega}\;|\;|x-y_{0}|^{\alpha}<\varepsilon\}$ that 
\begin{align*}
 |f|_{\mathcal{C}^{k}(\overline{\Omega})}^{E}
&\leq \sup_{\beta\in\N_{0}^{d},|\beta|\leq k}\|(\partial^{\beta})^{E}f(y_{0})\|_{E}
 +\sup_{\substack{x\in\Omega, x\neq y_{0} \\ \beta\in\N_{0}^{d},|\beta|\leq k}}\|(\partial^{\beta})^{E}f(x)-(\partial^{\beta})^{E}f(y_{0})\|_{E}\\
&\leq |f|_{\mathcal{C}^{k}(\overline{\Omega})}^{E}
 +\sup_{\substack{x\in\overline{\Omega}, x\neq y_{0} \\ \beta\in\N_{0}^{d},|\beta|\leq k}}
 \frac{\|(\partial^{\beta})^{E}f(x)-(\partial^{\beta})^{E}f(y_{0})\|_{E}}{|x-y_{0}|^{\alpha}}|x-y_{0}|^{\alpha} \\
&\leq |f|_{\mathcal{C}^{k}(\overline{\Omega})}^{E}
 +\max_{x\in\overline{\Omega}}|x-y_{0}|^{\alpha}
 \sup_{\substack{x\in\overline{\Omega}\setminus\mathbb{B}_{\varepsilon}\\ \beta\in\N_{0}^{d},|\beta|\leq k}}
 \frac{\|(\partial^{\beta})^{E}f(x)-(\partial^{\beta})^{E}f(y_{0})\|_{E}}{|x-y_{0}|^{\alpha}}\\
&\phantom{\leq}+\varepsilon \sup_{\substack{x\in\mathbb{B}_{\varepsilon}, x\neq y_{0} \\ \beta\in\N_{0}^{d},|\beta|\leq k}}
 \frac{\|(\partial^{\beta})^{E}f(x)-(\partial^{\beta})^{E}f(y_{0})\|_{E}}{|x-y_{0}|^{\alpha}}\\
&\leq \Bigl(1+\max_{x\in\overline{\Omega}}|x-y_{0}|^{\alpha}\Bigr)
 q_{M\times((\overline{\Omega}\setminus\mathbb{B}_{\varepsilon})\times\{y_{0}\})}^{E}(f)+\varepsilon \|f\|^{E}
\end{align*}
for all $f\in\mathcal{C}^{k,\alpha}(\overline{\Omega},E)$. 
Thus the topologies $\tau_{\mathcal{K}_{k,\overline{\Omega}_{\operatorname{wd}}}}^{E}$ and 
$\tau_{|\cdot|_{\mathcal{C}^{k}(\overline{\Omega})}^{E}}$ coincide on $\|\cdot\|^{E}$-bounded sets. 
Due to \cite[I.3.1 Lemma, p.~27]{cooper1978} and \prettyref{thm:standard_saks_space} (a) 
this yields that $(\mathcal{C}^{k,\alpha}(\overline{\Omega},E),\|\cdot\|^{E},\tau_{|\cdot|_{\mathcal{C}^{k}(\overline{\Omega})}^{E}})$ 
is a Saks space. Furthermore, we deduce that
$\gamma(\|\cdot\|^{E},\tau_{|\cdot|_{\mathcal{C}^{k}(\overline{\Omega})}^{E}})=\gamma(\|\cdot\|^{E},\tau_{\mathcal{K}_{k,\overline{\Omega}_{\operatorname{wd}}}}^{E})$ by the definition of the mixed topology. 
Since $\tau_{|\cdot|_{\mathcal{C}^{k}(\overline{\Omega})}^{E}}$ is metrisable, $(\mathcal{C}^{k,\alpha}(\overline{\Omega},E),\|\cdot\|^{E},\tau_{|\cdot|_{\mathcal{C}^{k}(\overline{\Omega})}^{E}})$ is C-sequential by 
\cite[Proposition 5.7, p.~2681--2682]{kruse_meichnser_seifert2018}.

(b) By \cite[3.9 Example (vi), p.~11]{kruse2023b} 
$(\mathcal{C}^{k,\alpha}(\overline{\Omega}),\|\cdot\|,\tau_{|\cdot|_{\mathcal{C}^{k}(\overline{\Omega})}})$ 
is semi-Montel. Moreover, the observations 
$\tau_{\mathcal{N}_{k,\overline{\Omega}_{\operatorname{wd}}}}\leq\tau_{\mathcal{K}_{k,\overline{\Omega}_{\operatorname{wd}}}}\leq\tau_{|\cdot|_{\mathcal{C}^{k}(\overline{\Omega})}}$ on $\mathcal{C}^{k,\alpha}(\overline{\Omega})$ imply
that $\gamma(\|\cdot\|,\tau_{|\cdot|_{\mathcal{C}^{k}(\overline{\Omega})}})
=\gamma_{\operatorname{s}}(\|\cdot\|,\tau_{\mathcal{S}})$ for 
$\mathcal{S}\in\{\mathcal{N}_{k,\overline{\Omega}_{\operatorname{wd}}},\mathcal{K}_{k,\overline{\Omega}_{\operatorname{wd}}}\}$ by \prettyref{rem:pre_Saks_unit_ball_char} (c).

(c) We have $\mathcal{C}^{k,\alpha}(\overline{\Omega},E)
=\mathcal{C}^{k,\alpha}(\overline{\Omega},E)_{\sigma}$ by \cite[5.3.3 Corollary, p.~106]{kruse2023}. 
From parts (a) and (b), $\tau_{\operatorname{p}}\leq\tau_{\mathcal{K}_{k,\overline{\Omega}_{\operatorname{wd}}}}$ 
and \prettyref{prop:relation_weak_strong} (c) we deduce that the Saks space $(\mathcal{C}^{k,\alpha}(\overline{\Omega},E),\|\cdot\|^{E},\tau_{|\cdot|_{\mathcal{C}^{k}(\overline{\Omega})}^{E}})$ is complete.
\end{proof}

\begin{rem}
Let the assumptions of \prettyref{cor:hoelder_diff__mixed} (b) be fulfilled. 
Since the set $\{\beta\in\N^{d}_{0}\;|\;|\beta|=k\}$ is finite and 
$\tau_{\mathcal{N}_{\Lambda}}=\tau_{\mathcal{N}_{k,\overline{\Omega}_{\operatorname{wd}}}}$, 
we obtain analogously to the proof of \prettyref{thm:standard_saks_space} (d) that 
$\gamma(\|\cdot\|,\tau_{|\cdot|_{\mathcal{C}^{k}(\overline{\Omega})}})=\gamma(\|\cdot\|,\tau_{\mathcal{N}_{k,\overline{\Omega}_{\operatorname{wd}}}})$ is generated by the system of seminorms 
\[
\vertiii{f}_{(x_{n},y_{n},a_{n})_{n\in\N}}\coloneqq\sup_{\substack{n\in\N\\ \beta\in\N^{d}_{0}, |\beta|= k}}\Bigl(
|f|_{\mathcal{C}^{k}(\overline{\Omega})}+\frac{|(\partial^{\beta})f(x_{n})-(\partial^{\beta})f(y_{n})|}{|x_{n}-y_{n}|^{\alpha}}\Bigr)a_{n}, \quad f\in \mathcal{C}^{k,\alpha}(\overline{\Omega}),
\]
where $(x_{n},y_{n})_{n\in\N}$ is a sequence in $\overline{\Omega}_{\operatorname{wd}}$ and $(a_{n})_{n\in\N}\in c_{0}^{+}$.

Let $\mathcal{W}_{0}\coloneqq\mathcal{W}_{\operatorname{b},0}^{+}(\overline{\Omega}_{\operatorname{wd}})$ or 
$\mathcal{W}_{\operatorname{usc},0}^{+}(\overline{\Omega}_{\operatorname{wd}})$ or $\mathcal{C}_{0}^{+}(\overline{\Omega}_{\operatorname{wd}})$. 
Since $\{\beta\in\N^{d}_{0}\;|\;|\beta|=k\}$ is a finite set, 
$\tau_{\mathcal{K}_{\Lambda}}=\tau_{\mathcal{K}_{k,\overline{\Omega}_{\operatorname{wd}}}}$ and 
$\overline{\Omega}_{\operatorname{wd}}$ locally compact, 
we may also modify the system of seminorms in \prettyref{thm:standard_saks_space} (e) and obtain that 
$\gamma(\|\cdot\|,\tau_{|\cdot|_{\mathcal{C}^{k}(\overline{\Omega})}})=\gamma(\|\cdot\|,\tau_{\mathcal{K}_{k,\overline{\Omega}_{\operatorname{wd}}}})$ is generated by the system of seminorms
\[
|f|_{w}^{\sim}\coloneqq \sup_{\substack{x,y\in\Omega\\x\neq y}}
\sup_{\beta\in\N^{d}_{0}, |\beta|= k}\Bigl(|f|_{\mathcal{C}^{k}(\overline{\Omega})}+\frac{|\partial^{\beta}f(x)-\partial^{\beta}f(y)|}{|x-y|^{\alpha}}\Bigr)w(x,y), \quad f\in \mathcal{C}^{k,\alpha}(\overline{\Omega}),
\]
for $w\in\mathcal{W}_{0}$ (without the modification the weights $w$ depend on $\beta$ as well).
\end{rem}

\section{The dual space of \texorpdfstring{$(\FVE,\gamma_{\operatorname{s}}(\|\cdot\|^{E},\tau_{\mathcal{N}_{\Lambda}}^{E}))$}{Fv(Omega,E) w.r.t.~a submixed topology}}
\label{sect:dual_submixed}

In our closing section we give a characterisation of the dual space of the space $\FVE$ 
from \prettyref{defn:standard_space} w.r.t.~the submixed topology 
$\gamma_{\operatorname{s}}(\|\cdot\|^{E},\tau_{\mathcal{N}_{\Lambda}}^{E})$. We know from the preceding section 
that this submixed topology often coincides with the mixed topology, at least if $\Omega$ is discrete or $E=\K$. 
Our proof is an adaptation of the proof 
of the corresponding result \cite[Theorem 5.1, p.~652]{vargas2018} for the case $\FVE=\mathrm{Lip}_{0}(\Omega,E)$. 
For a normed space $(E,\|\cdot\|_{E})$ we denote by $E\oplus_{1} E$ the space $E\times E$ equipped with the norm 
$\|\cdot\|_{E\oplus_{1} E}$ given by $\|(x,y)\|_{E\oplus_{1} E}\coloneqq \|x\|_{E}+\|y\|_{E}$ for $x,y\in E$. 
By $\ell^{1}(\N,(E\oplus_{1} E)^{\ast})$ we denote the space of $(E\oplus_{1} E)^{\ast}$-valued sequences 
$y=(y_{n})_{n\in\N}$ such that $\|y\|_{1}\coloneqq \sum_{n=1}^{\infty}\|y_{n}\|_{(E\oplus_{1} E)^{\ast}}<\infty$. 

\begin{thm}\label{thm:dual_submixed_finite}
Let $\Omega$ and $\Lambda$ be non-empty sets, $v\colon\Lambda\to (0,\infty)$, $(E,\|\cdot\|_{E})$ a normed space over $\K$, 
$\GE$ a linear subspace of $E^{\Omega}$, $q^{E}\colon\GE\to[0,\infty)$ a seminorm, $T^{E}\colon \GE\to E^{\Lambda}$ a linear map.  
Suppose that $(\FVE,\|\cdot\|^{E})$ is normed, $q^{E}= \|T_{0}^{E}(\cdot)\|_{E}$ for some linear map $T_{0}^{E}\colon\GE\to E$ 
and $f'\colon\FVE\to\K$. Then $f'\in (\FVE,\gamma_{\operatorname{s}}(\|\cdot\|^{E},\tau_{\mathcal{N}_{\Lambda}}^{E}))'$ if and only if 
there are $(\lambda_{n})_{n\in\N}\in\ell^{1}(\N,(E\oplus_{1} E)^{\ast})$ and $(x_{n})_{n\in\N}$ in $\Lambda$ such that 
\[
f'(f)=\sum_{n=1}^{\infty}\lambda_{n}\bigl(T_{0}^{E}(f),T^{E}(f)(x_{n})v(x_{n})\bigr),\quad f\in\FVE .
\]
\end{thm}
\begin{proof}
$\Leftarrow$ Let there be sequences $(\lambda_{n})_{n\in\N}\in\ell^{1}(\N,(E\oplus_{1} E)^{\ast})$ and $(x_{n})_{n\in\N}$ 
in $\Lambda$ such that 
\[
f'(f)=\sum_{n=1}^{\infty}\lambda_{n}\bigl(T_{0}^{E}(f),T^{E}(f)(x_{n})v(x_{n})\bigr) 
\]
for all $\FVE$. Then $f'$ is linear. Since $\sum_{n=1}^{\infty}\|\lambda_{n}\|_{(E\oplus_{1} E)^{\ast}}<\infty$, 
there is a positive sequence $(\mu_{n})_{n\in\N}$ such that $\lim_{n\to\infty}\mu_{n}=\infty$ and 
$C\coloneqq\sum_{n=1}^{\infty}\mu_{n}\|\lambda_{n}\|_{(E\oplus_{1} E)^{\ast}}<\infty$ 
by \cite[Chap.~IX, \S 39, Theorem of Dini, p.~293]{knopp1951}. It follows that 
\begin{align*}
     |f'(f)|
&\leq\sum_{n=1}^{\infty}\|\lambda_{n}\|_{(E\oplus_{1} E)^{\ast}}(\|T_{0}^{E}(f)\|_{E}+\|T^{E}(f)(x_{n})\|_{E}v(x_{n}))\\
&\leq C\sup_{n\in\N}(\|T_{0}^{E}(f)\|_{E}+\|T^{E}(f)(x_{n})\|_{E}v(x_{n}))\mu_{n}^{-1}
 = C\|f\|_{(x_{n},\mu_{n}^{-1})_{n\in\N}}^{E}
\end{align*}
for all $f\in\FVE$, implying $f'\in (\FVE,\gamma_{\operatorname{s}}(\|\cdot\|^{E},\tau_{\mathcal{N}_{\Lambda}}^{E}))'$ 
by \prettyref{thm:standard_saks_space} (d).

$\Rightarrow$ Let $f'\in (\FVE,\gamma_{\operatorname{s}}(\|\cdot\|^{E},\tau_{\mathcal{N}_{\Lambda}}^{E}))'$. Then there are a sequence 
$(x_{n})_{n\in\N}$ in $\Lambda$, $(a_{n})_{n\in\N}\in c_{0}^{+}$ and $C\geq 0$ such that 
\begin{equation}\label{eq:dual}
|f'(f)|\leq C\|f\|_{(x_{n},a_{n})_{n\in\N}}^{E}=\sup_{n\in\N}(\|T_{0}^{E}(f)\|_{E}+\|T^{E}(f)(x_{n})\|_{E}v(x_{n}))\widetilde{a}_{n}
\end{equation}
for all $f\in\FVE$ by \prettyref{thm:standard_saks_space} (d) where $\widetilde{a}_{n}\coloneqq Ca_{n}$ for all $n\in\N$. 
Let $c_{0}(\N,E\oplus_{1} E)$ denote the space of $(E\oplus_{1} E)$-valued null-sequences on $\N$. We define the linear subspace 
\[
X\coloneqq \{(T_{0}^{E}(f)\widetilde{a}_{n},T^{E}(f)(x_{n})v(x_{n})\widetilde{a}_{n})_{n\in\N}\;|\;f\in\FVE\}
\]
of $c_{0}(\N,E\oplus_{1} E)$ and the functional $g^{\ast}\colon X\to \K$ given by 
\[
g^{\ast}\bigl((T_{0}^{E}(f)\widetilde{a}_{n},T^{E}(f)(x_{n})v(x_{n})\widetilde{a}_{n})_{n\in\N}\bigr)\coloneqq f'(f).
\]
The functional $g^{\ast}$ is well-defined and linear by \eqref{eq:dual} combined with the linearity of $T_{0}^{E}$ and $T^{E}$. The estimate 
\begin{flalign*}
&\hspace{0.37cm}\bigl|g^{\ast}\bigl((T_{0}^{E}(f)\widetilde{a}_{n},T^{E}(f)(x_{n})v(x_{n})\widetilde{a}_{n})_{n\in\N}\bigr)\bigr|\\
&=|f'(f)|
 \underset{\eqref{eq:dual}}{\leq}\sup_{n\in\N}(\|T_{0}^{E}(f)\|_{E}+\|T^{E}(f)(x_{n})\|_{E}v(x_{n}))\widetilde{a}_{n}\\
&=\sup_{n\in\N}\|(T_{0}^{E}(f)\widetilde{a}_{n},T^{E}(f)(x_{n})v(x_{n})\widetilde{a}_{n})\|_{E\oplus_{1} E}\\
&=\|(T_{0}^{E}(f)\widetilde{a}_{n},T^{E}(f)(x_{n})v(x_{n})\widetilde{a}_{n})_{n\in\N}\|_{\infty}
\end{flalign*}
for all $f\in\FVE$ implies that $g^{\ast}$ is continuous on $(X,{\|\cdot\|_{\infty}}_{\mid X})$ where $\|\cdot\|_{\infty}$ denotes 
the supremum norm on $c_{0}(\N,E\oplus_{1} E)$. Due to the Hahn--Banach theorem there exists an 
extension $\widehat{g}^{\,\ast}\in (c_{0}(\N,E\oplus_{1} E),\|\cdot\|_{\infty})'$ of $g^{\ast}$. 
Since the map 
\[
\Theta\colon(\ell^{1}(\N,(E\oplus_{1} E)^{\ast}),\|\cdot\|_{1})\to (c_{0}(\N,E\oplus_{1} E),\|\cdot\|_{\infty})',\; 
\Theta(y)(z)\coloneqq \sum_{n=1}^{\infty}y_{n}(z_{n}),
\]
is an isometric isomorphism, there is $(\kappa_{n})_{n\in\N}\in\ell^{1}(\N,(E\oplus_{1} E)^{\ast})$ such that 
$\widehat{g}^{\,\ast}(z)=\sum_{n=1}^{\infty}\kappa_{n}(z_{n})$ for all $z\in c_{0}(\N,E\oplus_{1} E)$. 
We set $\lambda_{n}\coloneqq \kappa_{n}\widetilde{a}_{n}$ for all $n\in\N$ and note that 
\[
    \sum_{n=1}^{\infty}\|\lambda_{n}\|_{(E\oplus_{1} E)^{\ast}}
\leq \|(\kappa_{n})_{n\in\N}\|_{1}(\sup_{m\in\N}|\widetilde{a}_{m}|)<\infty,
\]
implying $(\lambda_{n})_{n\in\N}\in\ell^{1}(\N,(E\oplus_{1} E)^{\ast})$. Finally, we conclude that 
\begin{align*}
  f'(f)
&=\widehat{g}^{\,\ast}\bigl((T_{0}^{E}(f)\widetilde{a}_{n},T^{E}(f)(x_{n})v(x_{n})\widetilde{a}_{n})_{n\in\N}\bigr)\\
&=\sum_{n=1}^{\infty}\lambda_{n}\bigl(T_{0}^{E}(f),T^{E}(f)(x_{n})v(x_{n})\bigr)
\end{align*} 
for all $f\in\FVE$. 
\end{proof}

\begin{rem}
If $q^{E}=0$, then we may take $(\lambda_{n})_{n\in\N}\in\ell^{1}(\N,(\{0\}\oplus_{1} E)^{\ast})=\ell^{1}(\N,E^{\ast})$ 
in \prettyref{thm:dual_submixed_finite}. 
\end{rem}

We observe that \prettyref{thm:dual_submixed_finite} is applicable to the spaces $\FVE=\mathcal{C}v(\Omega,E)$, 
$\mathcal{H}v(\Omega,E)$, $\mathcal{C}_{P}v(\Omega,E)$ and $\mathrm{Lip}_{0}(\Omega,E)$ with $T^{E}_{0}(f)\coloneqq 0$ for 
$f\in\FVE$, implying $q^{E}=0$, and to the space $\FVE=\mathcal{B}v(\D,E)$ with $T^{E}_{0}(f)\coloneqq f(0)$ 
for $f\in\mathcal{B}v(\D,E)$.

\bibliography{biblio_saks_mixed_vector_valued}
\bibliographystyle{plainnat}

\end{document}